\numberwithin{equation}{section}
\newtheorem{Theorem}{Theorem}
\newtheorem{Lemma}{Lemma}
\newtheorem{Assumption}{Assumption}
\newtheorem{Proposition}{Proposition}
\newtheorem{Definition}{Definition}
\newtheorem{Corollary}{Corollary}
\newtheorem{Remark}{Remark}
\numberwithin{equation}{section}
\def\embed{\hookrightarrow}
\def\F{\mathcal F}
\def\la{\langle}
\def\ra{\rangle}
\def\xah{\hat{x}_t}
\def\xa{x_t}
\def\xaus{x_{\textup{aux}}}
\def\xausd{x_{\textup{aux}}(\delta)}
\def\xaush{\hat{x}_{\textup{aux}}}
\def\adp{\kappa_{\textup{DP}}}
\def\C{\mathbb{C}}
\def\xda{x^\delta_{\kappa_{\textup{DP}}}}
\def\Tda{T^\delta_\kappa}
\def\xdag{{x^\dag}}
\def\Lom3{L^2(\Omega)^3}
\def\yd{y^\delta}
 \def\p{\partial} 
\def \Vh0{\stackrel{\circ}{V}_h} \def\to{\rightarrow}
    \def\R{{\mathbb R}}
  \def\f{\frac}  
\def\p{\partial}
\def\E{{\mathcal E}}  
\newcommand{\lc}
{\mathrel{\raise2pt\hbox{${\mathop<\limits_{\raise1pt\hbox
{\mbox{$\sim$}}}}$}}}
\newcommand{\gc}
{\mathrel{\raise2pt\hbox{${\mathop>\limits_{\raise1pt\hbox{\mbox{$\sim$}}}}$}}}
\newcommand{\ec}
{\mathrel{\raise2pt\hbox{${\mathop=\limits_{\raise1pt\hbox{\mbox{$\sim$}}}}$}}}
\def\bb{\begin{equation}} \def\ee{\end{equation}}
\def\beqn{\begin{eqnarray}}  \def\eqn{\end{eqnarray}}
\def\beq{\begin{equation}} \def\eeq{\end{equation}}
\def\beqnx{\begin{eqnarray*}} \def\eqnx{\end{eqnarray*}}
\def\bn{\begin{enumerate}} \def\en{\end{enumerate}}
\def\bd{\begin{description}} \def\ed{\end{description}}
\title{\bf Oversmoothing Tikhonov regularization   in Banach spaces }
\author{
De-Han Chen\footnote{School of Mathematics and Statistics $\&$ Hubei Key Laboratory of Mathematical
Sciences, Central  China Normal University, Wuhan, 430079, P.R.China. The
work of DC was financially supported by
National Natural Science Foundation of China
(Nos.  11701205 and 11871240). Email: dehan.chen@uni-due.de}\and
Bernd Hofmann\footnote{Faculty of Mathematics, Chemnitz University of Technology, 09107 Chemnitz, Germany.
The work of BH was financially supported by German Research Foundation (DFG grant HO 1454/12-1).  Email: hofmannb@mathematik.tu-chemnitz.de}
\and Irwin Yousept \footnote{Universit\"{a}t Duisburg-Essen, Fakult\"{a}t f\"{u}r Mathematik, Thea-Leymann-Str. 9, D-45127 Essen, Germany. The work of IY was financially supported by German Research Foundation (DFG grant YO159/2-2). Email:
 irwin.yousept@uni-due.de.
} \footnote{Author to whom any correspondence should be addressed.}
}
\begin{document}

\date{}
\maketitle

\begin{abstract}
This paper develops a   Tikhonov regularization theory for nonlinear ill-posed operator equations in Banach spaces. As the main challenge, we consider the so-called oversmoothing state  in the sense that the Tikhonov penalization is not able to capture the true solution regularity and leads to the infinite penalty value in the solution.  We establish a vast extension of the Hilbertian convergence theory      through the  use of   invertible sectorial operators from the   holomorphic functional calculus and   the prominent theory of interpolation scales in Banach spaces.  Applications of the proposed theory involving  $\ell^1$,   Bessel potential spaces, and Besov spaces are discussed.

\end{abstract}

 \noindent {\footnotesize Mathematics Subject Classification:    47J06,  46E50
}

\noindent {\footnotesize Key words: Nonlinear ill-posed operator equation;  Oversmoothing Tikhonov regularization; Banach spaces; Sectorial operators; Interpolation Banach scales; Inverse radiative problems.
}

\section{Introduction}

Tikhonov regularization is a    celebrated and powerful method for solving a wide class of nonlinear  ill-posed inverse problems of the type:  Given  $y \in Y$, find $x \in D(F)$ such that
\begin{equation} \label{eq:opeq}
F(x)=y,
\end{equation}
where  $F: D(F) \subseteq X \to Y$ is a nonlinear operator with domain $D(F)$ acting  between  two  Banach spaces $X$ and $Y$. Here, $D(F)$ is a closed and convex subset of $X$, and the forward operator $F$ is assumed to be weakly sequentially continuous.  Moreover,  for simplicity,  we suppose that the inverse problem \eqref{eq:opeq} admits a unique solution $\xdag \in D(F)$.   In the presence of   a  small perturbation  $y^\delta \in Y$   satisfying
\begin{equation} \label{ydelta}
\|y^\delta - y \|_Y \le \delta, \quad  \delta  \in (0,\delta_{\max}],
\end{equation}
for a  fixed constant $\delta_{\max}>0$,   the Tikhonov regularization method  proposes  a stable approximation  of the true solution $\xdag$ to   \eqref{eq:opeq}
by solving the following minimization problem:
\begin{equation}
\label{eq:Tik}
\left\{
\begin{aligned}
&\textrm{Minimize} \quad \Tda(x):=\|F(x)-\yd\|^{\nu}_Y+\kappa\|x\|_{V}^{m},\\
& \textrm{subject to} \quad    x\in \mathcal{D} := D(F) \cap V.
\end{aligned}
\right .
\end{equation}
In the setting of \eqref{eq:Tik},    $\kappa >0$ and $\nu,m\ge1$ are real numbers and  denote, respectively,  the Tikhonov regularization parameter and the exponents to the norms of the misfit functional and of the penalty functional. Furthermore, $V$ is a Banach space that is continuously embedded into $X$ with a strictly finer topology ($\exists z \in X: \, \|z\|_V=\infty$) such that the sub-level sets of the penalty functional $\|x\|_{V}^{m}$ are weakly sequentially pre-compact in $X$.  Thanks to this embedding $V\embed X$, the penalty $\|\cdot\|_V$ is stabilizing. Therefore,  by the presupposed conditions on the forward operator  $F:D(F) \subseteq X \to Y$, we obtain the existence and stability of  solutions  $x_\kappa^\delta$ to \eqref{eq:Tik} for all $\kappa>0$ (cf.~\cite[Section 4.1]{Schuster12} and \cite[Section~3.2]{Scherzer09}).

Throughout  this paper, the Tikhonov regularization parameter $\kappa$ is specified based on the following variant of the  discrepancy principle:
 For a  prescribed constant $C_{DP}>1$,   we choose  $\kappa=\adp>0$ in \eqref{eq:Tik} such that
\beq\label{dp}
\|F(\xda)-y^\delta\|_Y = C_{DP}\delta.
\eeq
In this paper,  let  $\delta_{\max}$ be sufficiently small, and we assume   that for all   $\delta \in (0,\delta_{\max}]$ and all  $y^\delta \in Y$ fulfilling  \eqref{ydelta},   there exist a   parameter   $\adp=\adp(\delta,y^\delta)>0$  and a solution $\xda$ to \eqref{eq:Tik} for   the regularization parameter $\kappa = \adp$ such that \eqref{dp} holds. If $F$ is linear, then  the condition
$$
\|y^\delta\|_Y>C_{DP}\delta
$$
is sufficient for the existence of  $\adp$ in the discrepancy principle \eqref{dp}.  However, due to possibly occurring duality gaps of the minimization problem (\ref{eq:Tik}), the solvability of   \eqref{dp} may fail to hold for     nonlinear operators $F$.   Sufficient conditions for the existence of $\adp$  can be found in \cite[Theorem~3.10]{AnzRam10}. The existence of  $\adp$ is assured in general whenever the minimizers  to \eqref{eq:Tik} are uniquely determined for all $\kappa>0$.

Unfortunately, our present techniques do not allow for a generalization of the discrepancy principle \eqref{dp} by replacing the  equality with an inequality. Our argumentations for the derivation of \eqref{VA} and \eqref{est5} are based on  the equality condition    \eqref{dp}.

The classical Tikhonov regularization theory relies on the fundamental assumption that the true solution $\xdag$ lies in the underlying penalization   space $V$. Under this requirement,  the  Tikhonov regularization method \eqref{eq:Tik} has been widely explored by many authors and  seems to have reached an advanced and satisfactory stage of mathematical development. In the real application, however, the   assumption $\xdag \in V$ often fails to hold since the (unknown)   solution regularity   generally cannot be predicted   a priori from the mathematical model. In other words, the so-called \emph{oversmoothing} state
\begin{equation} \label{eq:infty}
\Tda(\xdag)=\infty
\end{equation}
is highly possible to occur. For this reason, our present paper considers  the critical circumstance  \eqref{eq:infty}, which makes  the   analysis of   \eqref{eq:Tik} becomes  highly challenging and appealing at the same time.  In particular, the fundamental minimizing property $\Tda(x^\delta_\kappa) \le \Tda(\xdag)$ for any solutions to \eqref{eq:Tik}, used innumerably   in the classical regularization theory, becomes useless owing to (\ref{eq:infty}).

 Quite recently, motivated by the seminal paper \cite{Natterer84} for linear inverse problems, the second author and Math\'{e}  \cite{Hofmann18} studied the Tikhonov regularization method for nonlinear inverse problems with oversmoothing penalties in {\it Hilbert scales}. Their work  considers the quadratic case $\nu=m=2$ for \eqref{eq:Tik} and Hilbert spaces $X$, $Y$, and $V$.   Benefiting from the variety of link conditions in Hilbert scales,  they constructed auxiliary elements through  specific {\it proximal operators} associated with an auxiliary quadratic-type Tikhonov functional, which can be minimized in an explicit manner.  This idea leads to a convergence result for \eqref{eq:Tik} with a power-type rate. Unfortunately,  \cite{Hofmann18} and all its subsequent  extensions  \cite{GerthHof20,Hofmann20,HofHof20,HofPla20}  cannot be applied to the Banach setting or to the non-quadratic case  $m,\nu\neq 2$. First steps towards very special Banach space models for oversmoothing regularization have been taken recently in \cite{GerthHof19} and \cite[Section~5]{Miller20}.

Building on a  profound  application of sectorial operators  from the     holomorphic functional calculus (cf. \cite{Pazy}) and     the celebrated  theory of interpolation scales in Banach spaces (cf. \cite{Lunardi,Triebel}), our paper develops two novel convergence results  (Theorems \ref{main:sectorial} and \ref{main}) for the oversmoothing Tikhonov regularization problem \eqref{eq:opeq}-\eqref{eq:infty}. They substantially extend under comparable conditions the recent results for the Hilbert scale case to the  general Banach space setting. Furthermore, we are able to circumvent the technical assumption on  $\xdag$ being an interior point of $D(F)$ (see \cite{Hofmann18}) by  introducing an alternative invariance assumption, which serves as a remedy in the case of $\xdag \notin \textrm{int}(D(F))$.
We should underline that our theory is established  under a  two-sided nonlinear assumption \eqref{eq:F} on the forward operator. More precisely,   \eqref{eq:F}  specifies  that  for all $x \in D(F)$ the norm   $\|F(x)-F(\xdag)\|_Y$ is bounded from below and above by some  factors of $\| x-\xdag\|_{U}$ for a certain Banach space $U$,  whose topology is weaker than  $X$.  This condition       is    motivated by the Hilbertian case \cite[Assumption 2]{Hofmann18} and   seems to be reasonable  as it may characterize the degree of ill-posedness of the underlying inverse problem \eqref{eq:opeq}. On this basis,  Theorem \ref{main:sectorial} proves a convergence rate result for \eqref{eq:opeq}-\eqref{eq:infty} in the case where  $V$ is  governed by  an invertible  $\omega$-sectorial operator $A: D(A)\subset X \to X$ with a sufficiently small angle $\omega$ such that
\beq\label{eq:V}
 D(A)=V\quad \text{with norm equivalence}\quad
\|\cdot\|_V \sim \|A \cdot\|_X.
\eeq
The condition \eqref{eq:V} and the proposed invertible $\omega$-sectorial property allow us to apply the exponent laws and moment inequality (Lemma \ref{lemma:interp}), which are together with the  holomorphic functional calculus (Lemma \ref{lemma:func})   the central  ingredients for our proof.
It turns out   that our arguments   for our first result can be refined  by the theory of interpolation Banach  scales    with   appropriate decomposition operators  for the corresponding scales. This leads to our final result   (Theorem \ref{main}) which essentially generalizes    Theorem \ref{main:sectorial}.  In particular, Theorem \ref{main} applies to  the case where the underlying space $V$ cannot be described by an invertible sectorial operator satisfying \eqref{eq:V}. This occurs  (see Lemma \ref{lemma:example}), for instance, if the Banach space $X$ is reflexive (resp. separable),  but $V$ is non-reflexive (resp. non-separable).  Applications   and  examples with various Banach spaces, including $\ell^1$,   Bessel potential spaces, and Besov spaces,     are    presented and discussed in the final section.

This paper is organized as follows.  In the upcoming subsection,  we  recall some basic definitions and well-known facts regarding  interpolation couples and sectorial operators. The main results of this paper (Theorems \ref{main:sectorial} and \ref{main}) and all their mathematical requirements  are stated in Section \ref{sec:3}.  The proofs for these two results are presented, respectively, in  Sections \ref{sec:proof1} and \ref{sec:proof2}. The final section discusses  various applications of our theoretical findings, including those arising from inverse elliptic coefficient problems.

 Lastly, we mention that
 it would be desirable to replace the version \eqref{dp} of the discrepancy principle used throughout this paper by the {\it sequential discrepancy principle} (see, e.g.,~\cite[Algorithm~4.7]{HofPla20}), but to this more flexible principle there are not even convergence rates results for the oversmoothing case in the much simpler Hilbert scale setting (cf.~\cite{Hofmann18} where also only \eqref{dp} applies).
Therefore, such   extension is reserved for our future work.

\subsection{Preliminaries}\label{sec:pre}

	We begin by introducing   terminologies and notations used in this paper.    The space of all linear and bounded operators from $X$ to $Y$ is denoted by
	$$
	B(X,Y)= \{A: X \to Y \textrm{ is     linear and bounded}\},
	$$
	endowed with  the operator norm $\|A\|_{X\to Y}:= \sup_{\|x\|_X=1} \|Ax\|_{Y}$.
	If $X=Y$, then we simply write $B(X)$ for $B(X,X)$. The notation $X^*$ stands for the dual space of $X$.    The domain and range of a linear operator $A:D(A)\subset X\to Y$ is denote by $D(A)$ and $\textup{rg}(A)$, respectively. 
	A linear  operator $A:D(A) \subset X  \to X$
	is called closed if its graph $\{(x,Ax), ~ x \in D(A)\}$ is closed in  $X\times X$. If $A:D(A) \subset X  \to X$ is a linear and closed   operator,  then
	$$
	\rho(A) := \{\lambda \in \C   \mid    \lambda\textup{id} - A : D(A) \to X \textrm{ is  bijective and}\,\, ( \lambda\textup{id} - A )^{-1}\in B(X)\}
	$$
	and $$
	  \sigma(A) := \C \setminus \rho(A)
	$$
	denote, respectively,  the resolvent set and the spectrum of $A$. For every $\lambda \in \rho(A)$, the operator
	$
	R(\lambda,A):= (\lambda \textup{id} - A)^{-1} \in B(X)
	$
	is referred to as the resolvent operator of $A$. If $X$ is a Hilbert space, a densely defined operator $A:D(A)\subset X\to X$ is called self-adjoint if $A^* x=A x$ for all  $x\in D(A)$ and $D(A)=D(A^*)$. Moreover, if $X$ is a Hilbert space and
	$(Ax,x)_X> 0$ for all $x\in D(A) \setminus \{0\}$, then we say that $A:D(A)\subset X\to X$ is positive definite. If there exists a constant $c>0$, independent of $a$ and $b$, such that $c^{-1} a\leq b\leq c a $, we write $a\sim b$.  Throughout this paper,  we also make use of the set $\mathbb{N}_0:=\mathbb{N}\cup\{0\}$.

For two given Banach spaces $\mathcal X_1$ and $\mathcal X_2$, we call  $(\mathcal X_1,\mathcal X_2)$   an interpolation couple  if and only if there exists a locally convex topological space  $\mathcal U$ such that the embeddings
$$
 \mathcal X_1 \embed \mathcal U  \quad \text{and} \quad \mathcal X_2\embed \mathcal U
 $$
  are continuous. In this case, both $\mathcal X_1\cap \mathcal X_2$ and $\mathcal X_1+\mathcal X_2$ are well-defined Banach spaces. We say that  $\mathcal X_3$ is an intermediate space in $(\mathcal X_1,\mathcal X_2)$  if the embeddings
  $$
  \mathcal X_1\cap \mathcal X_2\embed \mathcal X_3\embed \mathcal X_1+\mathcal X_2
  $$
   are continuous.  Furthermore,   for   $s\in [0,1]$, we write  	
	$$
\mathcal X_3\in  J_s(\mathcal X_1,\mathcal X_2) \quad \iff \quad	\exists c\geq 0 \  \forall\, x\in \mathcal X_1\cap \mathcal X_2   : \  \|x\|_{\mathcal X_3}\leq c\|x\|_{\mathcal X_1}^{1-s}\|x\|_{\mathcal X_2}^s.
	$$
For a given interpolation couple $(\mathcal X_1,\mathcal X_2)$ and   $s\in[0,1]$,  $[\mathcal X_1,\mathcal X_2]_s$  denotes the complex  interpolation between $\mathcal X_1$ and $\mathcal X_2$.  For the convenience of the reader, we provide the definition of $[\mathcal X_1,\mathcal X_2]_s$ in the appendix. By a well-known result \cite[Proposition B.3.5]{Hasse},  we know that
	\beq\label{comp0}
	[\mathcal X_1,\mathcal X_2]_s\in J_s(\mathcal X_1,\mathcal X_2) \quad \forall s \in [0,1].
	\eeq
	On the other hand, for $s\in (0,1)$ and $q\in [1,\infty]$,  $(\mathcal X_1,\mathcal X_2)_{s,q}$  stands for the real  interpolation between $\mathcal X_1$ and $\mathcal X_2$ (see appendix for the precise definition). Similarly to \eqref{comp0}  {(see \cite[Corollary1.2.7]{Lunardi})}, it holds that
	\beq\label{compreal}
	(\mathcal X_1,\mathcal X_2)_{s,q}\in J_s(\mathcal X_1,\mathcal X_2) \quad \forall s \in (0,1) \quad \forall q\in [1,\infty].
	\eeq
\begin{Lemma}[see{\cite[Theorem B.2.3.]{Hasse}}]\label{lemma:interp-real}
Let $(\mathcal X_1, \mathcal X_2)$ and $(\mathcal Y_1,\mathcal Y_2)$ be interpolation couples. If
$T\in B(\mathcal X_1,\mathcal Y_1)\cap B(\mathcal X_2,\mathcal Y_2)$, then $T\in B((\mathcal X_1,\mathcal X_2)_{\tau,q},(\mathcal Y_1,\mathcal Y_2)_{\tau,q})$ for all $\tau \in (0,1)$ and $q\in [1,\infty]$. Moreover,
$$
\|T\|_{(\mathcal X_1,\mathcal X_2)_{\tau,q}\to (\mathcal Y_1,\mathcal Y_2)_{\tau,q}}\leq \|T\|^{1-\tau}_{\mathcal X_1\to \mathcal Y_1}
\|T\|^{\tau}_{\mathcal X_2\to \mathcal Y_2}.
$$
\end{Lemma}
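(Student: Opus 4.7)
The plan is to proceed via the K-functional characterization of real interpolation spaces. Recall that for any interpolation couple $(\mathcal Z_1, \mathcal Z_2)$ the Peetre K-functional
$$K(t, z; \mathcal Z_1, \mathcal Z_2) := \inf\{\|z_1\|_{\mathcal Z_1} + t\|z_2\|_{\mathcal Z_2} : z = z_1 + z_2,\ z_i \in \mathcal Z_i\}$$
gives an equivalent description of the real interpolation norm, namely (for $q<\infty$)
$$\|z\|_{(\mathcal Z_1, \mathcal Z_2)_{\tau, q}} = \left(\int_0^\infty \bigl(t^{-\tau} K(t, z; \mathcal Z_1, \mathcal Z_2)\bigr)^q \frac{dt}{t}\right)^{1/q},$$
and analogously with the essential supremum over $t>0$ when $q=\infty$.

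Write $M_i := \|T\|_{\mathcal X_i \to \mathcal Y_i}$ and first treat the generic case $M_1 M_2 > 0$. The first key step is the transformation rule
$$K(t, Tx; \mathcal Y_1, \mathcal Y_2) \le M_1 \, K\!\bigl(t M_2/M_1,\ x;\ \mathcal X_1, \mathcal X_2\bigr).$$
This is immediate: any admissible decomposition $x = x_1 + x_2$ yields $Tx = Tx_1 + Tx_2$ with $\|Tx_i\|_{\mathcal Y_i} \le M_i \|x_i\|_{\mathcal X_i}$, hence
$$\|Tx_1\|_{\mathcal Y_1} + t\|Tx_2\|_{\mathcal Y_2} \le M_1\bigl(\|x_1\|_{\mathcal X_1} + (tM_2/M_1)\|x_2\|_{\mathcal X_2}\bigr),$$
and taking the infimum over decompositions produces the claim. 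The second step is to insert this bound into the integral defining $\|Tx\|_{(\mathcal Y_1, \mathcal Y_2)_{\tau, q}}$ and perform the change of variable $s = tM_2/M_1$, which preserves $dt/t$ and extracts the factor $(M_2/M_1)^\tau$ from $t^{-\tau}$. A short computation then yields
$$\|Tx\|_{(\mathcal Y_1, \mathcal Y_2)_{\tau, q}} \le M_1 \cdot (M_2/M_1)^\tau \|x\|_{(\mathcal X_1, \mathcal X_2)_{\tau, q}} = M_1^{1-\tau} M_2^\tau \|x\|_{(\mathcal X_1, \mathcal X_2)_{\tau, q}},$$
and the $q=\infty$ case is handled identically with the supremum in place of the integral.

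The only genuine obstacle is the degenerate regime $M_1 M_2 = 0$, where the scaling $tM_2/M_1$ becomes ill-defined. If $M_1 = M_2 = 0$, then $T$ vanishes on $\mathcal X_1 + \mathcal X_2$ and the inequality is trivial. If, say, $M_2 = 0$ while $M_1 > 0$, one can apply the generic estimate with $\varepsilon > 0$ in place of $M_2$ (using the trivial bound $\|T\|_{\mathcal X_2 \to \mathcal Y_2} \le \varepsilon$) and let $\varepsilon \downarrow 0$ to conclude $Tx = 0$; the symmetric case is analogous. Apart from this rescaling subtlety, the entire proof reduces to the single change-of-variable computation above.
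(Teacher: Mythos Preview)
The paper does not supply its own proof of this lemma; it is quoted verbatim from \cite[Theorem~B.2.3]{Hasse} as a standard fact from real interpolation theory, so there is nothing to compare your argument against on the paper's side.

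Your proof is correct and is exactly the classical $K$-functional argument one finds in the standard references (Bergh--L\"ofstr\"om, Lunardi, Triebel, or indeed Haase): the pointwise estimate $K(t,Tx;\mathcal Y_1,\mathcal Y_2)\le M_1\,K(tM_2/M_1,x;\mathcal X_1,\mathcal X_2)$ followed by the change of variable $s=tM_2/M_1$ in the defining integral. Your treatment of the degenerate case $M_1M_2=0$ via an $\varepsilon$-perturbation is also standard and valid, since $\tau\in(0,1)$ guarantees that the factor $\varepsilon^{1-\tau}$ (respectively $\varepsilon^{\tau}$) vanishes in the limit.
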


 As described in the introduction, our theory is realized through the use of sectorial operators and  holomorphic functional calculus.  For the sake of completeness, let us recall some terminologies and well-known results regarding sectorial operators.
For $\omega\in (0,\pi)$, let
$$S_\omega:=\{z\in \mathbb{C}\backslash\{0\}\mid |\arg z|<\omega\}$$ denote the symmetric sector around the positive axis of aperture angle $2\omega$. If $\omega=0$, then we set $S_\omega=(0,+\infty)$.

\begin{Definition}[cf. {\cite{Hasse,kriegler2016paley}}] \label{def:sec}
	Let $\omega\in (0,\pi)$.  A  linear and closed operator $A:D(A)\subset X\to X$ is called $\omega$-sectorial  if the following conditions hold:
	\begin{enumerate}
		\item[(i)] the  spectrum $\sigma(A)$ is contained in $\overline{S_\omega}$.
		
		\item[(ii)] $\textup{rg}(A)$ is dense in $X$.
		
		\item[(iii)]
		$
		\forall\, \varphi\in (\omega,\pi) \,\, \exists \,\, C_\varphi>0 \,\, \forall \,\lambda\in \mathbb{C}\backslash \overline{S_\varphi}:
		\|\lambda R(\lambda,A)\|_{X\to X}\leq C_\varphi.
		$
	\end{enumerate}
	If $A:D(A)\subset X\to X$ is $\omega$-sectorial for all  $\omega \in (0,\pi)$, then it  is called $0$-sectorial.
\end{Definition}

If $X$ is a Hilbert space, and $A:D(A)\subset X\to X$ is positive definite and self-adjoint, then it is a $0$-sectorial operator  (see, e.g., \cite[Theorem 3.9.]{Pazy}). Moreover, a large class of second elliptic operators in general function spaces is also $\omega$-sectorial \cite[Section 8.4]{Pazy} for some $0\leq \omega<\frac{\pi}{2}$. Note that (ii) and (iii) imply that every $\omega$-sectorial operator is injective (cf. \cite{Hasse}). In the following, let $\omega\in (0,\pi)$ and $A:D(A)\subset X\to X$ be a  $\omega$-sectorial operator. For each  $\varphi\in (\omega,\pi)$,  we introduce the function space
$$
\mathcal H (S_\varphi):=\{f: S_\varphi \mapsto \mathbb{C}\, \textrm{is holomorphic} \mid \exists  C,\beta>0\, \forall z\in S_\varphi: |f(z)|\leq  C\min\{|z|^\beta, |z|^{-\beta}\}\}.
$$
We enlarge this algebra to
\beq\label{def:E}
\E(S_\varphi):=\mathcal H (S_\varphi) \oplus \textrm{Span}\{1\} \oplus \textrm{Span}\{\eta\}
\eeq
with $\eta(z):= (1+z)^{-1}$.
Given  a function $f\in \E(S_\varphi)$ with $f(z)=\psi(z)+c_1 + c_2\eta(z)$ for $c_1,c_2 \in \C$,   we   define
\beq\label{GAf}
G_A(f):=f(A):=\psi(A)+c_1\text{id}+ c_2(\textup{id}+A)^{-1}\in B(X)
\eeq
with $\psi(A)$  defined by the Cauchy-Dunford  integral
\beq\label{psi:integral}
\psi(A) :=\frac{1}{2\pi i}\int_{\Gamma_{\omega'}} \psi(z)R(z,A) dz,
\eeq
where $\Gamma_{\omega'}=\p S_{\omega'}$ denotes the boundary of the sector  $S_{\omega'}$ that is oriented counterclockwise and $\omega'\in (\omega,\varphi)$.   Note that the above integral is absolute convergent. Furthermore, by the Cauchy integral formula for vector-valued holomorphic functions, it admits the same value for all $\omega'\in (\omega,\varphi)$. Details of such construction can be  found in \cite{Hasse}.
\begin{Lemma}[see {\cite[Lemma 2.2.3]{Hasse}}]\label{Lemma:es}
Let $\varphi\in (0,\pi)$.
A holomorphic function $f: S_\varphi \to \mathbb{C}$ belongs to  $\E(S_\varphi)$  if and only if $f$ is bounded and has finite polynomial limits at $0$ and $\infty$, i.e.,  the limits $f_0:=\lim_{S_\varphi\ni z\to 0}f(z)$, $f_\infty:=\lim_{S_\varphi\ni z\to 0}f(z^{-1})$ exist in $\C$, and
$$
\lim_{S_\varphi\ni z\to 0}\f{|f(z)-f_0|}{|z|^\beta}=\lim_{S_\varphi\ni z\to 0 }\f{|f(z^{-1})-f_\infty|}{|z^{-1}|^\beta}=0
$$
for some $\beta>0$.
\end{Lemma}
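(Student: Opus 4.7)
The plan is to prove both directions by isolating the behavior of $f$ at $0$ and at $\infty$ through the switching function $\eta(z) = (1+z)^{-1}$, which satisfies $\eta(z) \to 1$ as $z \to 0$ and $\eta(z) \to 0$ as $z \to \infty$ in $S_\varphi$. Thus $\textrm{Span}\{1\}\oplus\textrm{Span}\{\eta\}$ can realize any prescribed pair of finite limits at $0$ and $\infty$, leaving a remainder that one hopes lies in $\mathcal H(S_\varphi)$.

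For the sufficiency direction ($\Leftarrow$), given $f$ bounded with polynomial limits $f_0$ at $0$ and $f_\infty$ at $\infty$, I would set $c_1 := f_\infty$, $c_2 := f_0 - f_\infty$, and $\psi := f - c_1 - c_2\eta$, and then verify $\psi \in \mathcal H(S_\varphi)$. Near $z = 0$, rewriting $\psi = (f - f_0) + c_2(1 - \eta)$ and using $1 - \eta(z) = z/(1+z)$ together with the hypothesis $|f(z) - f_0| = o(|z|^\beta)$ would give $|\psi(z)| \leq C|z|^{\min\{\beta,1\}}$. Near $z = \infty$, a symmetric substitution $w = z^{-1}$ combined with $|\eta(z)| \leq C|z|^{-1}$ would yield the analogous $|\psi(z)| \leq C|z|^{-\min\{\beta,1\}}$. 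On intermediate annular bands inside $S_\varphi$, holomorphy and continuity deliver boundedness, so the global decay estimate defining $\mathcal H(S_\varphi)$ follows after possibly shrinking the exponent.

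For the necessity direction ($\Rightarrow$), given $f = \psi + c_1 + c_2 \eta \in \E(S_\varphi)$, boundedness is immediate since $\psi$ is bounded by $C\min\{|z|^\beta,|z|^{-\beta}\} \leq C$, $c_1$ is constant, and $\eta$ is bounded on $S_\varphi$ because $-1 \notin \overline{S_\varphi}$ whenever $\varphi < \pi$. The decay of $\psi$ forces $\psi(z) \to 0$ at both $0$ and $\infty$, yielding $f_0 = c_1 + c_2$ and $f_\infty = c_1$. The polynomial rates would then be read off from $f(z) - f_0 = \psi(z) - c_2 z/(1+z)$ and its symmetric version at infinity by the same elementary estimates used above.

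The main obstacle is purely bookkeeping: harmonizing the exponent uniformly across the three regimes (near $0$, near $\infty$, and the intermediate annulus), and confirming that the singularity of $\eta$ at $z = -1$ contributes nothing since $-1$ lies strictly outside $\overline{S_\varphi}$ for $\varphi < \pi$. No deep functional-analytic input is required; the argument is entirely elementary from the definitions of $\mathcal H(S_\varphi)$ and $\E(S_\varphi)$.
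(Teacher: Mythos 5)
Your proof is correct, and since the paper simply cites this result from Haase's monograph without reproducing the proof, there is no in-paper argument to compare against; your elementary decomposition argument is the natural one and is, to my knowledge, essentially what Haase does. A few remarks on soundness and bookkeeping. For necessity, the identifications $f_\infty = c_1$ and $f_0 = c_1 + c_2$ are right, and boundedness of $\eta$ on $S_\varphi$ indeed hinges on $\operatorname{dist}(-1, \overline{S_\varphi}) = \sin(\pi - \varphi) > 0$, which you invoke correctly. For sufficiency, your choice $c_1 = f_\infty$, $c_2 = f_0 - f_\infty$ makes $\psi(z) = (f(z) - f_0) + c_2 \frac{z}{1+z}$ near $0$ and $\psi(z) = (f(z) - f_\infty) - c_2\eta(z)$ near $\infty$, giving $|\psi(z)| \le C|z|^{\min\{\beta,1\}}$ for $|z|\le 1$ and $|\psi(z)| \le C|z|^{-\min\{\beta,1\}}$ for $|z|\ge 1$; boundedness of $\psi$ on the remaining compact annulus lets you absorb everything into a single constant with exponent $\beta' = \min\{\beta,1\}$, which is exactly the $\mathcal H(S_\varphi)$ estimate. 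One small caveat worth flagging: as printed, the paper's displayed polynomial-limit condition at $\infty$ divides by $|z^{-1}|^\beta \to \infty$, which would make that limit vacuously zero; this is a typo (it should read $|z^{-1}|^{-\beta}$, equivalently $|z|^\beta$), and you have interpreted it in the intended way, namely $|f(w) - f_\infty| = o(|w|^{-\beta})$ as $w \to \infty$.
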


There is a standard way to extend the functional calculus $G_A: \E(S_\varphi)\to B(X)$ to a larger algebra of functions on the sector $S_\varphi$  (see  \cite{Hasse}) with a larger range containing unbounded operators in $X$.   In particular, since $A$ is injective,  one can define {\it fractional power} $A^s$ for all $s\in \R$ by the extended functional calculus (see  \cite{Hasse}).  If $A:D(A)\subset X\to X$ is also invertible, then for any $s\geq 0$, the possibly unbounded operator $A^s:D(A^s)\subset X\to X$ is  an invertible operator with inverse $A^{-s}\in B(X)$ (see \cite[Proposition 3.2.3]{Hasse}). Therefore, $D(A^{-s})=X$, and
 {\it the fractional power domain space} $D(A^s)$ is a Banach space endowed with the norm
$\|\cdot\|_{D(A^s)}:=\|A^s\cdot\|_X$.  We collect the properties of the fractional power  operator $A^s$ that will be used below.

\begin{Lemma}[see {\cite[Propositions 3.2.1, 3.2.3 and 6.6.4.]{Hasse}}]\label{lemma:interp}
	If $A:D(A)\subset X\to X$ is an invertible $\omega$-sectorial operator for some $\pi>\omega\geq 0$, then the following assertions hold:
	
	\begin{enumerate}
		
		\item[\textup{ (i)}] For   $s_1,s_2\in \R$ with $s_1\geq s_2$, the embedding $D(A^{s_1})\embed D(A^{s_2})$ is continuous, and
		\beq\label{At}
		t^{s_1}A^{s_1}x=(tA)^{s_1} x\quad \forall  t>0 \, \text{and}\,\, x\in D(A^{s_1}).
		\eeq
		
		\item[\textup{ (ii)}] For   $s,t\in \R$,  it holds that $A^{t+s}x=A^{t}A^{s}x$ for all $x\in D(A^\tau)$ with $\tau=\max\{t,s,t+s\}$.
		
		\item[\textup{ (iii)}] If  $\beta\in (0,\pi/\omega)$, then $A^{\beta}$ is  invertible  $\omega\beta$-sectorial and for all $s>0$, we have $(A^\beta)^s x=A^{\beta s} x$ for all $x\in D(A^{\beta s})$. 		
		
		\item[\textup{ (iv)}] {\rm (Moment inequality)} For all
		$a\geq 0$, $s\ge 0$, and $-a\leq 0< r\leq s$,  there exists a constant $L>0$ such that
		\beq\label{Moment}
		\|A^r x\|_X\leq L\|A^s x\|_X^{\frac{r+a}{a+s}}\|A^{-a}x\|^{\f{s-r}{a+s}}_{X}\quad\forall\,x\in D(A^s).
		\eeq

	\end{enumerate}

\end{Lemma}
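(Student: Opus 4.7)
The plan is to reduce everything to the multiplicativity and closure properties of the extended holomorphic functional calculus from $\mathcal{H}(S_\varphi)$ to the algebra of regularizable symbols. Since $A$ is invertible and $\omega$-sectorial, the power $z\mapsto z^s$ is regularizable for every $s\in\mathbb{R}$, and $A^s$ is therefore unambiguously defined via $G_A$; for $s<0$ one obtains $A^s\in B(X)$, while for $s>0$ the operator $A^s$ is closed with dense domain $D(A^s)$. The cornerstone I would invoke repeatedly is the composition rule $f(A)g(A)\subseteq (fg)(A)$, valid whenever $f$, $g$, and $fg$ all lie in the extended algebra, with equality once the ranges are matched correctly.

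Parts (i)--(iii) then fall out of this machinery. For (ii), the algebraic identity $z^{t+s}=z^t\cdot z^s$ immediately yields $A^{t+s}x=A^t A^s x$ on $D(A^\tau)$ with $\tau=\max\{t,s,t+s\}$. For (i), the embedding $D(A^{s_1})\embed D(A^{s_2})$ for $s_1\geq s_2$ reduces via (ii) to the estimate $\|A^{s_2}x\|_X\leq \|A^{-(s_1-s_2)}\|_{X\to X}\,\|A^{s_1}x\|_X$, where $A^{-(s_1-s_2)}\in B(X)$ because $A$ is invertible and $s_1-s_2\geq 0$. The scaling rule $t^{s_1}A^{s_1}x=(tA)^{s_1}x$ is extracted by the substitution $z\mapsto tz$ in the Cauchy-Dunford integral \eqref{psi:integral}, first for $\psi\in\mathcal{H}(S_\varphi)$ and then transferred to the fractional powers by a standard regularization argument. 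For (iii), the spectral mapping theorem for fractional powers gives $\sigma(A^\beta)\subseteq \overline{S_{\omega\beta}}$ when $\omega\beta<\pi$, and the resolvent bound follows by deforming the contour $\Gamma_{\omega'}$; the identity $(A^\beta)^s x=A^{\beta s}x$ is again multiplicativity applied to $(z^\beta)^s=z^{\beta s}$ on $S_{\pi/\beta}$.

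The genuinely hard step is the moment inequality (iv), which I would tackle through Hadamard's three-lines lemma. For fixed $x\in D(A^s)$, introduce the vector-valued holomorphic function $h(\zeta):=e^{\eps\zeta^2}A^\zeta x$ on the strip $\{\zeta\in\mathbb{C} : -a\leq \mathrm{Re}\,\zeta\leq s\}$, where the Gaussian factor provides decay along vertical lines. Dualizing against $\varphi\in X^*$ to reduce to the scalar three-lines theorem and then letting $\eps\to 0^+$ yields the log-convex bound
\[
\|A^r x\|_X\leq L\,\|A^s x\|_X^{(r+a)/(s+a)}\,\|A^{-a}x\|_X^{(s-r)/(s+a)},
\]
which is precisely \eqref{Moment}. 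The main obstacle is to secure uniform-in-$t$ control of the boundary operators $\|A^{s+it}\|_{X\to X}$ and $\|A^{-a+it}\|_{X\to X}$ in the pure Banach setting, where no spectral resolution is available; this has to be extracted from sectoriality through careful contour estimates on the symbols $\zeta\mapsto z^\zeta$, tracking the polynomial growth in $|\mathrm{Im}\,\zeta|$ induced by the rotation of the contour $\Gamma_{\omega'}$ and absorbing it into the Gaussian regularizer before taking $\eps\to 0^+$.
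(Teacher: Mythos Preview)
The paper itself does not prove this lemma at all: it is stated with the citation ``see \cite[Propositions 3.2.1, 3.2.3 and 6.6.4]{Hasse}'' and no proof is given in the body. So there is no paper proof to compare against, only the textbook arguments in Haase.

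Your treatment of (i)--(iii) is fine; those claims do follow from multiplicativity of the extended functional calculus together with invertibility, essentially as in Haase's Propositions~3.1.1 and~3.2.1.

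The gap is in (iv). Your three-lines argument hinges on having, for fixed $x\in D(A^s)$, a holomorphic map $\zeta\mapsto A^{\zeta}x$ on the closed strip $-a\le\operatorname{Re}\zeta\le s$ whose boundary values grow at most polynomially (or at worst sub-Gaussian) in $|\operatorname{Im}\zeta|$, so that the factor $e^{\eps\zeta^2}$ can absorb it. For a general invertible $\omega$-sectorial operator this control is \emph{not available}: boundedness of $A^{it}$ on $X$ for $t\in\mathbb{R}$ is precisely the BIP (bounded imaginary powers) hypothesis, which is strictly stronger than sectoriality. There exist sectorial operators for which $A^{it}$ is unbounded for every $t\neq 0$, so no contour deformation on the symbol $z\mapsto z^{\zeta}$ will recover even exponential-in-$|t|$ control of $\|A^{it}\|_{X\to X}$ from sectoriality alone. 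Consequently your Stein-interpolation route proves the moment inequality only under an extra BIP assumption that the lemma does not make.

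Haase's proof of the moment inequality (Proposition~6.6.4) avoids imaginary powers entirely. It proceeds through the Balakrishnan-type integral representation of $A^{r}$ in terms of resolvents $(\lambda+A)^{-1}$ and a splitting of the $\lambda$-integral at a scale chosen to balance the two pieces; alternatively one invokes the real-interpolation identification $D(A^{r})\in J_{(r+a)/(a+s)}(D(A^{s}),X^{-a}_A)$, which again uses only the resolvent bounds in Definition~\ref{def:sec}. Either route yields \eqref{Moment} for arbitrary invertible $\omega$-sectorial $A$. If you wish to keep a complex-interpolation flavour, you would need to add BIP as a hypothesis---which the paper's applications (e.g.\ $A_p$ in Section~\ref{sec:app}) do in fact satisfy---but then the lemma would no longer be the general statement cited from Haase.
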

If $A:D(A)\subset X\to X$ is an invertible $\omega$-sectorial operator for some $\pi>\omega\geq 0$,  we     define the following Banach space:
\beq
X^s_A:=\begin{cases} (D(A^{s}),\|A^s\cdot\|_X)\quad &s\geq 0,\\
\text{Completion of}\, X \, \textup{under} \ \text{the norm}\ \|A^{s}\cdot\|_X\quad &s<0.
\end{cases}
\eeq
If { $X$ is a reflexive Banach space}, then  the adjoint operator $A^*:D(A^*)\subset X^*\to X^*$ is also an invertible $\omega$-sectorial operator \cite[Proposition 2.1.1 (d) and (j)]{Hasse}. Moreover,   for all $s\geq 0$ and $x\in X$, it holds that
\beq\label{frac:dual}
	\|A^{-s }x\|_{X}=\|x\|_{(X^{*,s}_{A})^*},
	\eeq
where $X^{*,s}_{A}$ denotes the fractional power domain $D((A^*)^s)$  (see \cite[Chapter V, Theorem 1.4.6]{Amann95} for more details).

\begin{Lemma}[see {\cite[Theorem 2.3.3]{Hasse}, \cite[Lemma 2.2 and 2.3]{Hasse05}}]\label{lemma:func}
Let $0\leq \omega<\pi$,  $A:D(A)\subset X\to X$ be an invertible $\omega$-sectorial operator, and $g\in \E(S_\varphi)$ for $\varphi\in (\omega,\pi)$.  Then, for any $s\in \R$,  it holds that
\beq\label{sect:commutative}
A^s g(tA) x= g(tA) A^s x \quad \forall t>0 \, \text{and}\,\, x\in D(A^s).
\eeq
Moreover,
\beq\label{sect:supCg}
C_g:=\sup_{t>0}\|g(tA)\|_{X\to X}<\infty
\eeq
and the mapping $t\mapsto f(tA)$ is continuous from $(0,\infty)$ to $ B(X)$.
 If, in addition,  the mapping $z\mapsto \psi(z):=z^s g(z) $  is of class $\E(S_\varphi)$ for some $s\in \R$,  then it holds that
 \beq\label{inclusion}
 \textup{rg}(g(tA))\subset D(A^s) \quad \forall t>0 \quad \textrm{if } s>0
 \eeq
 and
 \beq\label{eq:gA}
(tA)^s g(tA) x=\psi(tA)x \quad\textrm{for all } t>0 \textrm{ and all } x \in X.
\eeq
\end{Lemma}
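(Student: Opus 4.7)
The plan is to exploit the multiplicativity and closedness properties of the extended holomorphic functional calculus developed around Definition \eqref{def:E}--\eqref{GAf}, together with the scaling behaviour $tA$ inherits from the sectoriality of $A$. Since every bounded operator produced by the calculus arises from the Cauchy--Dunford integral \eqref{psi:integral} against resolvents of $A$, every such operator commutes with $R(\lambda,A)$ for $\lambda\in\rho(A)$; from there the commutation with $A^s$ follows from closedness arguments, and the various scaling facts for $tA$ reduce estimates to those for $A$.

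For the commutation identity \eqref{sect:commutative} I would distinguish three cases. When $s=0$ it is trivial. When $s<0$, the operator $A^s\in B(X)$ is representable (up to a bounded correction) via a Cauchy--Dunford integral against resolvents of $A$; since $g(tA)$ is also built from such resolvents, Fubini applied to the two iterated contour integrals yields $A^s g(tA)=g(tA)A^s$ on all of $X$. When $s>0$, pick any $x\in D(A^s)$ and let $y:=A^s x$. By the case $s<0$ already handled, $A^{-s}g(tA)y=g(tA)A^{-s}y=g(tA)x$. Applying the closed operator $A^s$ to both sides and using that $g(tA)y\in D(A^s)$ (because $g(tA)$ commutes with the bounded $A^{-s}$, whose range is $D(A^s)$), one obtains $g(tA)A^s x=A^s g(tA)x$, as required.

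For the uniform bound \eqref{sect:supCg} and continuity, the substitution $z\mapsto tz$ in \eqref{psi:integral} gives
\[
g(tA)=\frac{1}{2\pi i}\int_{\Gamma_{\omega'}}g(tz)R(z,A)\,dz+c_1\operatorname{id}+c_2(\operatorname{id}+tA)^{-1},
\]
after decomposing $g=\psi+c_1+c_2\eta$ according to \eqref{def:E}. Because $A$ is $\omega$-sectorial, the resolvent bound (iii) in Definition \ref{def:sec} together with Lemma \ref{Lemma:es} produces an integrable majorant for $|g(tz)|\,\|R(z,A)\|_{X\to X}$ along $\Gamma_{\omega'}$ that is independent of $t>0$, yielding $C_g<\infty$. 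The same majorant is then used in the dominated convergence theorem, applied to the continuous dependence $t\mapsto g(tz)$ pointwise on $\Gamma_{\omega'}$ and to the resolvent identity for $(\operatorname{id}+tA)^{-1}$, to obtain continuity of $t\mapsto g(tA)$ in the operator norm.

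The range inclusion \eqref{inclusion} and the identity \eqref{eq:gA} are the most delicate part and where the calculus-theoretic bookkeeping must be done carefully. Assuming $z\mapsto\psi(z)=z^s g(z)\in\mathcal{E}(S_\varphi)$, the operator $\psi(tA)\in B(X)$ is directly defined by \eqref{GAf}. Multiplicativity of the extended calculus gives $\psi(tA)=(tA)^s g(tA)$ formally; rigorously, one writes $g(z)=z^{-s}\psi(z)$ on $S_\varphi$ and uses that $(tA)^{-s}\in B(X)$ (which uses invertibility of $A$ crucially; cf.\ the discussion following Lemma \ref{Lemma:es}) to infer $g(tA)=(tA)^{-s}\psi(tA)$. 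Since $\operatorname{rg}((tA)^{-s})=D((tA)^s)=D(A^s)$ by Lemma \ref{lemma:interp}(i)--(iii), inclusion \eqref{inclusion} follows, and applying the closed operator $(tA)^s$ to the composition formula recovers \eqref{eq:gA} on all of $X$. The principal obstacle here is to justify the composition $(tA)^s g(tA)=\psi(tA)$ as an identity in the extended calculus rather than only formally; this relies on the standard composition rule for regularizable functions in Haase's calculus, which is applicable because $\psi\in\mathcal{E}(S_\varphi)$ provides precisely the regularizer needed to absorb the unbounded factor $z^s$.
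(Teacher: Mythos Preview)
The paper does not give its own proof of this lemma; it is quoted directly from Haase (\cite{Hasse}, \cite{Hasse05}) without argument. Your sketch is essentially the standard Haase derivation and is sound in outline, so there is nothing to ``compare'' against a paper proof---you have supplied what the paper deliberately outsources.

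One imprecision worth tightening: in your uniform-bound paragraph you write the Cauchy--Dunford integral with integrand $g(tz)R(z,A)$ and then appeal to a $t$-independent integrable majorant for $|g(tz)|\,\|R(z,A)\|$. This does not work as written, because $g\in\mathcal{E}(S_\varphi)$ need not decay at $0$ or $\infty$ (it may contain the constant $c_1$ and the $\eta$-component). After decomposing $g=\psi+c_1+c_2\eta$ it is only $\psi\in\mathcal{H}(S_\varphi)$ that enters the contour integral, and it is $|\psi(tz)|\le C\min\{|tz|^\beta,|tz|^{-\beta}\}$ that, together with the sectorial resolvent bound $\|R(z,A)\|\le C_{\omega'}/|z|$ on $\Gamma_{\omega'}$, yields the $t$-uniform majorant (via the substitution $u=tz$). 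The remaining pieces $c_1\operatorname{id}$ and $c_2(\operatorname{id}+tA)^{-1}$ are bounded uniformly in $t$ by the invertibility and sectoriality of $A$ separately. With this correction your argument goes through, and the same decomposition handles the continuity statement via dominated convergence on the $\psi$-part and the resolvent identity on the $\eta$-part.
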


\section{Main results}\label{sec:3}

We begin by formulating the  required two-sided nonlinear  mathematical  property for the forward operator $F:D(F) \subseteq X \to Y$:

\begin{Assumption}[Two-sided nonlinear structure] \label{ass1} There exist a Banach space $U \supsetneq X$ and
two numbers $0<c_U\leq C_U<\infty$ such that
\beq\label{eq:F}
c_U\|x-\xdag\|_{U}\leq \|F(x)-F(\xdag)\|_Y\leq C_U\|x-\xdag\|_{U}\quad \forall x\in D(F).
\eeq
{Moreover, there exists a neighborhood $B^\dag\subset X$ of $x^\dag$ such that
 the operator $F:  D(F) \subseteq X  \to Y$ is continuous in $B^\dag \cap D(F)$. }
\end{Assumption}
If the norm of the pre-image space is weakened to $\|\cdot\|_{U}$, i.e., ~if we
consider $U=X$, then the left-hand inequality of (\ref{eq:F})
implies that   (\ref{eq:opeq}) is {\it locally well-posed} at $\xdag$ (see \cite{HofPla18}). Of course, we do not consider the   case $U=X$  in (\ref{eq:F})   since the operator equation    \eqref{eq:opeq} is supposed  to be {\it locally ill-posed}.  Let us also note that the pre-image space    characterizes the  ill-posedness for the problem under the condition \eqref{eq:F} (see also \cite{Hofmann18} for further discussions).  In view of Assumption \ref{ass1},  \eqref{dp} yields the following result:

\begin{Lemma}\label{lemma:conv}
Let Assumption \ref{ass1} be satisfied and let the regularization parameter $\adp>0$ be chosen according to the discrepancy principle \eqref{dp}. Then,
\[
\|x^\delta_{\adp}-\xdag\|_{U}\leq \f{C_{DP} +1}{c_U}\delta
\]
holds for all $\delta \in (0,\delta_{\max}]$.
\end{Lemma}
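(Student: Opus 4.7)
The proof is a short, direct combination of the discrepancy principle, the noise level bound, and the lower inequality in the two-sided nonlinearity condition. My plan is as follows.

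First, I would observe that since $F(\xdag)=y$ by hypothesis (existence of a unique solution), the triangle inequality in $Y$ gives
\begin{equation*}
\|F(\xda)-F(\xdag)\|_Y \le \|F(\xda)-y^\delta\|_Y + \|y^\delta - y\|_Y.
\end{equation*}
Next I would plug in the two available bounds: the discrepancy principle \eqref{dp} yields $\|F(\xda)-y^\delta\|_Y = C_{DP}\,\delta$, and the noise condition \eqref{ydelta} gives $\|y^\delta - y\|_Y \le \delta$. Combining these produces the upper bound
\begin{equation*}
\|F(\xda)-F(\xdag)\|_Y \le (C_{DP}+1)\,\delta.
\end{equation*}

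Finally, I would invoke the left-hand (lower) inequality of the two-sided nonlinear structure in Assumption \ref{ass1}, applied at $x = \xda \in D(F)$:
\begin{equation*}
c_U \,\|\xda-\xdag\|_U \le \|F(\xda)-F(\xdag)\|_Y \le (C_{DP}+1)\,\delta,
\end{equation*}
and dividing by $c_U>0$ yields the claimed estimate. There is no real obstacle here: everything follows by stringing together the hypotheses in the correct order, and the only subtle point is simply checking that $\xda$ indeed lies in $D(F)$ so that Assumption \ref{ass1} applies — but this is built into the standing assumption that a minimizer of \eqref{eq:Tik} satisfying \eqref{dp} exists for every $\delta\in(0,\delta_{\max}]$.
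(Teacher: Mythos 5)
Your proof is correct and follows exactly the same chain of inequalities as the paper: apply the triangle inequality in $Y$ together with \eqref{dp} and \eqref{ydelta} to bound $\|F(\xda)-F(\xdag)\|_Y$ by $(C_{DP}+1)\delta$, then use the lower inequality in \eqref{eq:F} and divide by $c_U$. The remark about $\xda\in D(F)$ being part of the standing assumptions is accurate and a nice point to make explicit.
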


\begin{proof}
In view of \eqref{ydelta}, \eqref{dp}, and  \eqref{eq:F},
$$
\|x^\delta_{\adp}-\xdag\|_{U}\leq \f{1}{c_U}\|F(x^\delta_{\adp})-F(\xdag)\|_Y\leq
\f{1}{c_U}(\|F(x^\delta_{\adp})-y^\delta\|_Y+\|y^\delta-y\|_Y)\leq \frac{(C_{DP}+1)\delta}{c_U}
$$
holds for all $\delta \in (0,\delta_{\max}]$  and all data $y^\delta$ obeying (\ref{ydelta}).
\end{proof}

Now we are ready to formulate the first main theorem. Its proof, the structure of which is analog to the proof of the theorem in \cite{Hofmann18}, will be given in Section~\ref{sec:proof1}.

\smallskip\smallskip

\begin{Theorem}\label{main:sectorial}
Suppose that   \eqref{eq:V}  and Assumption \ref{ass1} hold with an invertible $\omega$-sectorial operator  $A:D(A)\subset X\to X$  for  some angle $0\leq \omega <\pi$ and $U= X^{-a}_A$ for some  $a\geq 0$.  Furthermore, let the regularization parameter $\adp>0$ be chosen according to the discrepancy principle \eqref{dp}, and assume that there exists an
$f\in \E(S_\varphi)$ with  $\varphi \in (\omega,\pi)$  such that for every $s\in (0,1)$ the mappings
$z \mapsto z^{-(a+s)}(f(z)-1)$ and $z \mapsto z^s f(z)$ are of class $\E(S_\varphi)$. If the   solution $\xdag$ of  \eqref{eq:opeq} belongs to
$$
M_{\theta,E}:=\{x \in D(A^\theta)\mid \|A^\theta x\|_X\leq E\}
$$ for some $0<\theta<1$ and $E>0$ and satisfies either
\beq\label{sect:invariance}
\xdag \in  \text{int}(D(F))   \quad\textup{ or} \quad \exists \, t_0 \in (0,\infty) \  \forall\, 0<t\leq t_0 : \
f(tA) \xdag\in D(F),
\eeq
 then there exists a constant $c>0$, independent of   $\delta$, such that the error estimate
	\beq\label{conv:final}
	\|\xda-\xdag\|_{X} \le c\, \delta^{\f{\theta}{a+\theta}}
	\eeq
holds for all sufficiently small  $\delta>0$.
\end{Theorem}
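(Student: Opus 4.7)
The strategy is to adapt the Hilbert-scale argument of \cite{Hofmann18} by replacing its quadratic proximal auxiliary element with the functional-calculus element
\[
 \xausd := f(t(\delta)\,A)\,\xdag, \qquad t(\delta) := (\eta \delta)^{1/(a+\theta)},
\]
where $\eta>0$ is a small constant to be fixed so that the subsequent absorption argument is compatible with $C_{DP}>1$. All operator manipulations will be carried out inside the holomorphic functional calculus of Lemma~\ref{lemma:func}; this is precisely where the two explicit hypotheses on $z\mapsto z^{-(a+s)}(f(z)-1)$ and $z\mapsto z^s f(z)$ will be exploited, for the approximation error and for the smoothing property respectively.

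First I would establish the two controlling bounds
\[
 \|A\xausd\|_X \le C_1 E\,t^{\theta-1},\qquad \|A^{-a}(\xausd-\xdag)\|_X \le C_2 E\,t^{a+\theta},
\]
obtained from the representation formula \eqref{eq:gA} applied to $\psi_1(z):=z^{1-\theta}f(z)$ and $\psi_2(z):=z^{-(a+\theta)}(f(z)-1)$, both of class $\E(S_\varphi)$ by assumption, combined with the commutativity \eqref{sect:commutative}, the uniform bound \eqref{sect:supCg}, and the fact that $A^\theta\xdag\in X$. The inclusion $\xausd\in V=D(A)$ is a by-product of the first bound via \eqref{inclusion} applied to $\psi_1$; the inclusion $\xausd\in D(F)$ follows either from the second alternative of \eqref{sect:invariance} or, in the interior-point case, from $\xausd\to\xdag$ in $X$ as $\delta\to 0^+$. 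The latter convergence is obtained by writing $(f(tA)-\textup{id})\xdag=t^\theta\tilde h_\theta(tA)A^\theta\xdag$ with $\tilde h_\theta(z):=z^{-\theta}(f(z)-1)$; the membership $\tilde h_\theta\in \E(S_\varphi)$ is extracted from the hypothesis on $z^{-(a+\theta)}(f(z)-1)$ via Lemma~\ref{Lemma:es}.

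The discrepancy-principle comparison is the next step. Since $U=X_A^{-a}$, Assumption~\ref{ass1} together with the second estimate above gives
\[
 \|F(\xausd)-y^\delta\|_Y \le C_U C_2 E\,t^{a+\theta}+\delta = (1+C_U C_2 E\,\eta)\,\delta.
\]
Choosing $\eta$ small enough that $1+C_U C_2 E\,\eta<C_{DP}$, the minimizing property $\Tda(\xda)\le\Tda(\xausd)$ and the equality \eqref{dp} yield
\[
 (C_{DP}\delta)^\nu+\adp\|\xda\|_V^m \le ((1+C_U C_2 E\,\eta)\delta)^\nu+\adp\|\xausd\|_V^m,
\]
and the strictly negative difference of the $\delta^\nu$-terms permits the absorption $\|\xda\|_V\le\|\xausd\|_V$, hence $\|A\xda\|_X\le C_1 E\,t^{\theta-1}$. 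Combined with Lemma~\ref{lemma:conv}, this delivers the triangle-inequality inputs $\|A(\xda-\xausd)\|_X\lesssim t^{\theta-1}$ and $\|A^{-a}(\xda-\xausd)\|_X\lesssim\delta$.

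Finally, I would split $\|\xda-\xdag\|_X\le\|\xda-\xausd\|_X+\|\xausd-\xdag\|_X$ and estimate each summand through a Heinz-type ($r=0$) endpoint of the moment inequality \eqref{Moment}. For the first summand, $\xda-\xausd\in D(A)$ and interpolation between $D(A)$ and $X_A^{-a}$ gives $\|\xda-\xausd\|_X\lesssim t^{(\theta-1)a/(a+1)}\delta^{1/(a+1)}$, which simplifies to $\delta^{\theta/(a+\theta)}$ after substituting $t^{a+\theta}=\eta\delta$. For the second summand, the bound $\|A^\theta(\xausd-\xdag)\|_X=\|(f(tA)-\textup{id})A^\theta\xdag\|_X\lesssim E$ furnished by \eqref{sect:supCg} combined with the previous estimate for $\|A^{-a}(\xausd-\xdag)\|_X$, interpolated between $D(A^\theta)$ and $X_A^{-a}$, yields $\|\xausd-\xdag\|_X\lesssim t^\theta\sim\delta^{\theta/(a+\theta)}$. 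Summation produces \eqref{conv:final}. The principal technical obstacle I anticipate is the $r=0$ endpoint of \eqref{Moment}, which is not directly a corollary of Lemma~\ref{lemma:interp}(iv) and requires a density/limiting argument using that $\|A^{-r}\|_{X\to X}$ remains bounded as $r\to 0^+$; a secondary subtlety is that the absorption argument only has the flexibility afforded by the strict inequality $C_{DP}>1$, so the constant $\eta$ has to be tuned with care.
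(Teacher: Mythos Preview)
Your proposal is correct and follows essentially the same route as the paper. The one noteworthy variation is in how the auxiliary parameter is selected: you fix $t(\delta)=(\eta\delta)^{1/(a+\theta)}$ a priori and tune $\eta$ so that $\|F(\xausd)-y^\delta\|_Y<C_{DP}\delta$ strictly, whereas the paper uses the continuity of $t\mapsto\|F(f(tA)\xdag)-F(\xdag)\|_Y$ (this is where the continuity clause in Assumption~\ref{ass1} and the continuity statement in Lemma~\ref{lemma:func} enter) together with the intermediate value theorem to pick $t_{\textup{aux}}(\delta)$ satisfying $\|F(x_{t_{\textup{aux}}})-F(\xdag)\|_Y=(C_{DP}-1)\delta$ exactly. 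Both devices deliver $\|\xda\|_V\le\|\xausd\|_V$, after which the remaining interpolation steps are identical; your explicit choice is slightly more elementary since it bypasses the IVT and the local continuity hypothesis on $F$, while the paper's choice avoids having to calibrate a free constant. Your worry about the $r=0$ endpoint of \eqref{Moment} is not a real obstacle: the paper itself invokes the moment inequality at $r=0$ without comment (see the derivations of \eqref{xdacomp} and \eqref{sect:50}), and this case is a standard interpolation inequality for fractional powers of invertible sectorial operators.
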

\begin{Remark} The condition $\xdag \in \text{int}(D(F))$   means that   every $x \in X$ with a sufficiently small distance $\|x-\xdag\|_X$ belongs readily to $D(F)$.
If $\omega=0$, then the functions $f(z)=e^{-z^{a+1}}$ or $f(z)=(z^{a+1}+1)^{-1}$  satisfy  all the requirements
of Theorem \ref{main:sectorial} by Lemma \ref{Lemma:es}.  Moreover, if $\omega\in (0,\pi)$ and $a\geq 0$ are  sufficiently small, then  these two functions also satisfy   all the requirements  of Theorem \ref{main:sectorial}.

\end{Remark}

If both $X$ and $V$ are Hilbert spaces, then the condition   \eqref{eq:V}   with an invertible $0$-sectorial operator  $A:D(A)\subset X\to X$  is  valid if the embedding $V\embed X$ is, in addition, dense  (see Section \ref{subsect:hilbert} for more details).
We underline that     \eqref{eq:V}  is  the main restriction of   Theorem \ref{main:sectorial}  that could fail to hold in the practice, as the following lemma demonstrates:

\begin{Lemma}\label{lemma:example}
  Let $X$ be a reflexive (resp. separable) Banach space and $V$  non-reflexive (resp. non-separable). Then, there exists no  linear and closed operator $A: D(A)\subset X\to X$  satisfying  \eqref{eq:V}.
  \end{Lemma}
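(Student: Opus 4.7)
The plan is to argue by contradiction. Suppose there exists a linear closed operator $A : D(A) \subset X \to X$ with $D(A)=V$ and $\|x\|_V \sim \|Ax\|_X$. The first step is to show that the graph norm $\|x\|_A := \|x\|_X + \|Ax\|_X$ on $D(A)$ is equivalent to $\|\cdot\|_V$. Indeed, the continuous embedding $V \embed X$ gives $\|x\|_X \leq c\|x\|_V \leq c'\|Ax\|_X$, whence $\|x\|_A \leq (1+c')\|Ax\|_X$; the reverse bound $\|Ax\|_X \leq \|x\|_A$ is immediate. Since $A$ is closed, $(D(A),\|\cdot\|_A)$ is complete, so $(V,\|\cdot\|_V)$ and $(D(A),\|\cdot\|_A)$ coincide as Banach spaces via the identity map.

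The second step is to view $A$ as a linear map $A : V \to X$. The upper bound $\|Ax\|_X \leq C\|x\|_V$ makes $A$ continuous, while the lower bound $\|Ax\|_X \geq c^{-1}\|x\|_V$ renders $A$ injective and bounded below. Consequently, $A$ is a topological isomorphism between $V$ and its range $\textup{rg}(A)$, and $\textup{rg}(A)$ is closed in $X$ because bounded-below operators have closed range.

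The final step invokes the stability of reflexivity and separability under passage to closed subspaces. If $X$ is reflexive, every closed subspace of $X$ is reflexive, so $\textup{rg}(A)\cong V$ would be reflexive, contradicting the non-reflexivity of $V$. Analogously, every (not necessarily closed) subspace of a separable Banach space is separable, so if $X$ is separable then $\textup{rg}(A)$, and hence $V$, would be separable, contradicting non-separability. I do not foresee a real obstacle in this argument; the only point requiring care is the first step, where the identification of the graph norm with the $V$-norm relies crucially on the embedding $V\embed X$ to absorb the $\|x\|_X$ summand of $\|\cdot\|_A$ into the $\|A\cdot\|_X$ part.
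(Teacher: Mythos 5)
Your proof is correct. It takes a closely related but slightly different route from the paper's: you transport the structure of $V$ into $X$ via the range $\textup{rg}(A)$, showing $A: V \to X$ is a topological isomorphism onto a closed subspace because it is bounded and bounded below, whereas the paper transports $V$ into $X \times X$ via the graph map $P: x \mapsto (x,Ax)$, using closedness of $A$ directly to get a closed subspace. Both then conclude by the stability of reflexivity under passage to closed subspaces, and of separability under passage to arbitrary subspaces. The two arguments hinge on the same equivalence $\|\cdot\|_V \sim \|\cdot\|_X + \|A\cdot\|_X \sim \|A\cdot\|_X$ (absorbing the $\|\cdot\|_X$ term via $V \embed X$), which you establish in your first step and the paper invokes at the end. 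Your range-based version is marginally more economical in that it never needs to pass to the product space $X \times X$ or its reflexivity/separability; the paper's graph-based version is more in the spirit of the standard treatment of closed operators and does not need to observe separately that bounded-below operators on Banach spaces have closed range. Either is fully rigorous.
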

\begin{proof} Let us first consider the case where $X$ is  reflexive and $V$ is non-reflexive. We recall   the prominent Eberlein-$\breve{\textnormal{S}}$mulian theorem   that a Banach space is reflexive if and only if every   bounded sequence  contains a weakly converging subsequence.

Suppose   that there exists  a  linear and closed operator $A: D(A)\subset X\to X$ satisfying  \eqref{eq:V}. Let us consider the linear mapping
$$
P: D(A)  \to X\times X, \quad x \mapsto (x,Ax).
$$
By definition,   $P(D(A)) \subset X\times X$ is a closed subspace because $A: D(A)\subset X\to X$ is closed. Thus, since $X\times X$ is reflexive, $P(D(A))$ endowed with the norm $\|(x,Ax)\|_{P(D(A))}=\|x\|_X+ \|Ax\|_X$ is a reflexive Banach space (see  \cite[Theorem 1.22]{Adams}). On the other hand, thanks to  \eqref{eq:V} and $V \embed X$, both norms $\|\cdot\|_V$ and $\|\cdot\|_X+\|A \cdot\|_X$ are equivalent. Thus, the Eberlein-$\breve{\textnormal{S}}$mulian theorem  leads to a contradiction that  $\{V=D(A),\|\cdot\|_V\}$  is reflexive.

Let us next consider the case where $X$ is separable and $V$ is not separable. Suppose again that there exists  a  linear and closed operator $A: D(A)\subset X\to X$ satisfying  \eqref{eq:V}.  Then, as before,  since $P(D(A)) \subset X \times X$ is a closed subspace, and $X\times X$ is separable,  \cite[Theorem 1.22]{Adams}  implies that  $P(D(A))$ is  a separable Banach space. By the definition of separable spaces and since both norms $\|\cdot\|_V$ and $\|\cdot\|_X+\|A \cdot\|_X$ are equivalent, we obtain a contradiction that $\{V\!=\!D(A),\|\cdot\|_V\}$ is separable.
\end{proof}

Lemma \ref{lemma:example} motivates us to extend Theorem \ref{main:sectorial}  to the case where \eqref{eq:V} cannot be realized by  an invertible $\omega$-sectorial operator $A$. This   assumption is primarily required for the application of Lemma \ref{lemma:interp}. Therefore, it provides us with an illuminating hint of how to generalize the previous result by the theory of interpolation scales.

\begin{Assumption}[Interpolation scales] \label{scale} There exist a  Banach space $U$ satisfying Assumption \ref{ass1},  a family of  Banach spaces $\{X_s\}_{s\in [0,1]}$, and  a family of decomposition operators $\{P_t\}_{0<t\leq t_0} \subset B(U)$ with $t_0 \in (0,\infty)$ such that

\begin{enumerate}

\item[{\rm (i)}] The embeddings $V\embed X\embed U$ are continuous.

\item[{\rm (ii)}] $X_0=X$, $X_1=V$, and  the embedding $X_s\embed X_t$ is continuous for all $0\leq t\leq s\leq  1$.

\item[{\rm (iii)}] There exits a constant $a\geq 0$ such that for all $s\in (0,1]$ and   $r\in [0,s)$ it holds that
\beq\label{com-interp}
X_r\in J_{\f{s-r}{a+s}}(X_s,U) \quad \iff \quad \|x\|_{X_r}\leq L\| x\|_{X_s}^{\frac{r+a}{a+s}}\|x\|_{U}^{\frac{s-r}{a+s}}\quad \forall\,x\in X_s.
\eeq

\item[{\rm (iv)}] For any $s\in [0,1]$ and $t\in (0,t_0]$, it holds that  $P_t X_s\subset X_s$ with
	\beq\label{boud}
	C_P:=\sup_{0<t\leq t_0}\|P_t \|_{X_s\to X_s}<\infty,
	\eeq
        and  for any $x\in X$,  the mapping $t\to P_t x$ is continuous from $(0,t_0]$ into $ X$.
\item[{\rm (v)}]   For all  $0<s<1$, there exists a constant $C_{Proj}\geq C_P+1$   such that for all
	$0<t\leq t_0$
	\beq\label{proj}
	\|P_t -\textup{id} \|_{X_s\to U}\leq  C_{Proj} {t^{a+s}} \quad\text{and}\quad \|P_t \|_{X_s\to V}\leq  C_{Proj} t^{s-1}
	\eeq
	hold true with $a$ as in  \textup{(iii)}.
	\end{enumerate}
\end{Assumption}

The first three conditions    (i)-(iii) generalize the assumption of Theorem \ref{main:sectorial}  concerning the existence of an invertible $\omega$-sectorial operator  $A:D(A)\subset X\to X$. More precisely,  by Lemma \ref{lemma:interp} and  $X_s=X^{s}_A$, this assumption   implies  (i)-(iii), but not vice versa. On the other hand, (iv)-(v) weaken  the assumption of Theorem \ref{main:sectorial}  regarding the existence of the holomorphic function $f\in \E(S_\varphi)$.   Indeed,  as shown in   Section \ref{sec:proof1}, the linear operator  $P_t x:= f(t A)x$ satisfies the properties (iv)-(v). However, in general, the existence of  a family of  linear operators $\{P_t\}_{0<t\leq t_0} \subset B(U)$ fulfilling  (iv)-(v) does not   imply the existence of a holomorphic function $f\in \E(S_\varphi)$ satisfying the assumption of Theorem \ref{main:sectorial}.

Intuitively,  the decomposition property \eqref{proj} gives  a quantitative  characterization of the approximation  of elements in $V$ to an element $x\in X_s$.  Indeed,   \eqref{proj} shows that both  the ``distance'' between $x $ and $P_t x \in V$ in weaker norm and  the smoothness of $P_t x$ in $V$ can be controlled by  $\|x\|_{X^s_A}$. Similar properties have been utilized to verify variational source conditions for inverse PDEs problems in Hilbert spaces (see, e.g.,~\cite{chen2020,chenyousept2018}).

Let us finally state the regularity assumption for the true solution   $\xdag$ to  \eqref{eq:opeq}. Here,  in place of the exponent $A^\theta$ of the sectorial operator $A$,  we modify the smoothness condition of Theorem \ref{main:sectorial} by  using the scale of the Banach space $X$ and the corresponding  family   $\{P_t\}_{1<t\leq t_0}$ of decomposition operators from Assumption \ref{scale}.
\smallskip\smallskip

\begin{Assumption}[Solution smoothness]\label{ass:sol} There exist   $\theta\in (0,1)$ and $E>0$ such that
\beq\label{ME}
x^\dag\in M_{\theta,E}:=\{x\in X\mid \|x\|_{X_\theta}\leq E\},
\eeq
where $\{X_\theta\}_{\theta \in [0,1]}$ is  as in Assumption \ref{scale}.
Assume that one of the following conditions holds true:
\beq\label{interior}
\xdag \in \,\, \text{int}(D(F))  \quad\textup{ or} \quad
P_t \xdag\in D(F)\quad \forall \, 0<t\leq t_0,
\eeq
where $\{P_{t}\}_{0<t\leq t_0}$ is as in Assumption \ref{scale}.
 \end{Assumption}

 Now all assumptions are complete to formulate the second main theorem. Its proof will be given in Section~\ref{sec:proof2}

\begin{Theorem}\label{main}
Let Assumptions \ref{ass1}--\ref{ass:sol} be satisfied and and let the regularization parameter $\adp>0$ be chosen according to the discrepancy principle \eqref{dp}.  Then, there exists a    constant $c>0$, independent of  $\delta$, such that the error estimate
	\[
	\|\xda-\xdag\|_{X} \le c\, \delta^{\f{\theta}{a+\theta}}
	\]
	holds for all sufficiently small $\delta$.
\end{Theorem}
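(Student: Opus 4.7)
The plan is to follow the strategy of the proof of Theorem \ref{main:sectorial} but with the abstract decomposition family $\{P_t\}_{0<t\le t_0}$ of Assumption~\ref{scale} taking the place of the holomorphic functional calculus $f(tA)$. Concretely, I introduce the auxiliary element $\xausd := P_{t(\delta)}\xdag$ with $t(\delta) := c_0\,\delta^{1/(a+\theta)}$, where $c_0>0$ is chosen small enough that $t(\delta) \le t_0$ for all $\delta\in(0,\delta_{\max}]$ (after possibly shrinking $\delta_{\max}$) and $C_U C_{Proj} E\, c_0^{a+\theta} + 1 < C_{DP}$. The latter is feasible since $C_{DP}>1$ and will be crucial for the Tikhonov comparison argument.

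Next I verify that $\xausd\in\mathcal{D}=D(F)\cap V$. Property~(v) with $s=\theta$ yields the two key bounds
\[
\|\xausd\|_V \le C_{Proj} E\, t(\delta)^{\theta-1}, \qquad \|\xausd-\xdag\|_U \le C_{Proj} E\, t(\delta)^{a+\theta},
\]
so $\xausd\in V$; membership in $D(F)$ follows directly from the second alternative of \eqref{interior}, or in the interior case from the interpolation inequality \eqref{com-interp} with $(s,r)=(\theta,0)$, using the uniform bound $\|\xausd\|_{X_\theta}\le C_P E$ from (iv) and the $U$-estimate above to deduce $\|\xausd-\xdag\|_X \to 0$ as $\delta \to 0$. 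With Assumption~\ref{ass1} and the choice of $c_0$, the misfit obeys $\|F(\xausd)-y^\delta\|_Y \le C_U\|\xausd-\xdag\|_U + \delta \le (C_U C_{Proj} E c_0^{a+\theta}+1)\delta < C_{DP}\delta$. Inserting $\xausd$ into the Tikhonov minimizing inequality $\Tda(\xda) \le \Tda(\xausd)$ and invoking the discrepancy equality \eqref{dp}, the misfit difference is strictly negative, leaving
\[
\|\xda\|_V \le \|\xausd\|_V \le C_1\,\delta^{(\theta-1)/(a+\theta)}.
\]

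Finally I split the error via $\|\xda-\xdag\|_X \le \|\xda-\xausd\|_X + \|\xausd-\xdag\|_X$ and apply the moment inequality \eqref{com-interp} twice. For the auxiliary term, $\xausd-\xdag \in X_\theta$ with $\|\xausd-\xdag\|_{X_\theta} \le (C_P+1)E$, so with $(s,r)=(\theta,0)$ one gets $\|\xausd-\xdag\|_X \lc t(\delta)^\theta \lc \delta^{\theta/(a+\theta)}$. For the Tikhonov-residual term, $\xda-\xausd \in V$, and applying $(s,r)=(1,0)$ with $\|\xda-\xausd\|_V \lc \delta^{(\theta-1)/(a+\theta)}$ and $\|\xda-\xausd\|_U \lc \delta$ (the latter from Lemma~\ref{lemma:conv} combined with the $U$-bound on $\xausd-\xdag$) yields, after arithmetic simplification of the combined exponent $a(\theta-1)/[(a+1)(a+\theta)] + 1/(a+1)$, precisely $\|\xda-\xausd\|_X \lc \delta^{\theta/(a+\theta)}$. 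The main obstacle is exactly this exponent bookkeeping: the power law $t(\delta) = c_0\,\delta^{1/(a+\theta)}$ is forced by equating the $U$-error of $\xausd$ with $\delta$, and only the specific form $(r+a)/(a+s)$ of the moment exponent in \eqref{com-interp} makes both contributions collapse to the same rate $\delta^{\theta/(a+\theta)}$; verifying the strict inequality $\|F(\xausd)-y^\delta\|_Y < C_{DP}\delta$ to obtain $\|\xda\|_V \le \|\xausd\|_V$ is the only other delicate point, handled by calibrating $c_0$ against $C_{DP}-1>0$.
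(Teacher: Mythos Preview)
Your proof is correct and follows the same overall strategy as the paper---construct an auxiliary element $P_t\xdag\in\mathcal D$, compare Tikhonov values using the discrepancy equality \eqref{dp} to force $\|\xda\|_V\le\|\xausd\|_V$, and then interpolate via \eqref{com-interp}. The execution differs in two places. First, the paper selects the auxiliary parameter \emph{implicitly}: it invokes the continuity of $t\mapsto P_t\xdag$ (Assumption~\ref{scale}(iv)) and of $F$ near $\xdag$ (Assumption~\ref{ass1}) to hit the exact value $\|F(\hat x_{\textup{aux}}(\delta))-F(\xdag)\|_Y=(C_{DP}-1)\delta$ via the intermediate value theorem, and then extracts a lower bound on $t_{\textup{aux}}(\delta)$ from the right inequality in \eqref{eq:F}. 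You instead fix $t(\delta)=c_0\delta^{1/(a+\theta)}$ explicitly and calibrate $c_0$ against $C_{DP}-1>0$ so that only the strict inequality $\|F(\xausd)-y^\delta\|_Y<C_{DP}\delta$ is needed; this bypasses both continuity hypotheses and is a genuine simplification. Second, the paper first bounds $\|\xda-\xdag\|_{X_\theta}$ (via \eqref{com-interp} with $r=\theta$, $s=1$) and only then interpolates down to $X$, whereas you interpolate directly in $X$ on each summand of the splitting $\|\xda-\xausd\|_X+\|\xausd-\xdag\|_X$. Your exponent bookkeeping is correct: with $(s,r)=(1,0)$ one has $\frac{a(\theta-1)}{(a+1)(a+\theta)}+\frac{1}{a+1}=\frac{\theta}{a+\theta}$, matching the auxiliary term. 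The paper's route yields the extra intermediate information that $\|\xda-\xdag\|_{X_\theta}$ stays bounded, which your argument does not produce but also does not need.
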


\section{Proof of Theorem \ref{main:sectorial}} \label{sec:proof1}
Let $ \delta \in (0,\delta_{\max} ]$ be arbitrarily fixed. In view of  the moment inequality  (Lemma \ref{lemma:interp}) for $s=\theta$ and $r=0$, it holds that
\[
\|\xda-\xdag\|_{X}\leq L \|A^{\theta}(\xda-\xdag)\|_{X}^{\f{a}{a+\theta}} \|A^{-a}(\xda-\xdag )\|_X^{\f{\theta}{a+\theta}}.
\]
Accordingly, we have  $U= D(A^{-a})$ and   $\|\cdot\|_{U} = \|A^{-a} \cdot\|_X$, which yields  that
\[
\|\xda-\xdag\|_{X}\leq L \|A^{\theta}(\xda-\xdag)\|_{X}^{\f{a}{a+\theta}} \| \xda-\xdag \|_U^{\f{\theta}{a+\theta}}.
\]
Therefore,   Lemma \ref{lemma:conv} implies that
\beq\label{xdacomp}
\|\xda-\xdag\|_{X}\leq L \|A^{\theta}(\xda-\xdag)\|_{X}^{\f{a}{a+\theta}} \left(\f{C_{DP}+1}{c_U}\delta\right)^{\f{\theta}{a+\theta}}.
\eeq
In conclusion, Theorem   \ref{main:sectorial} is valid, once we can
show the existence of a constant  $\hat{E}>0$, independent of   $\delta$,  such that
\beq\label{hatE}
\|A^{\theta}(\xda-\xdag)\|_{X}\leq \hat{E}.
\eeq

\noindent
{\bf Step 1}.  Let us define the approximate elements
\beq\label{xa}
{x}_t:=\begin{cases}f(t A)\xdag \quad &t>0, \\
x^\dag \quad &t=0.
\end{cases}
\eeq
In the following, we prove that there exists a constant $C_{ap}>0$,   depending only on $f$, such that \beq\label{conv:-a}
\|A^{-a}(\xa-x^\dag)\|_{X}\leq  C_{ap} E t^{{\theta+a}} \quad \forall t>0,
\eeq
\beq\label{conv:1}
\|A \xa \|_{X}\leq C_{ap} E t^{\theta-1}\quad \forall t>0,
\eeq
\beq\label{bound:P}
\|A^{\theta}(\xa-\xdag)\|_{X_\theta}\leq (1+C_{ap} )E\quad \forall t>0,
\eeq
\beq\label{conv:0}
\|\xa-\xdag\|_{X}\leq  C_{ap} E t^{\theta}\quad \forall t>0.
\eeq
In the following,  let $t>0$ be arbitrarily fixed.   Since $\xdag \in D(A^\theta)$ and $A^\theta \xdag \in X=D(A^{-\theta})$,
applying \eqref{sect:commutative}  and Lemma \ref{lemma:interp} (ii),
 we obtain that
\beq
A^{-a}(f(t A)\xdag-\xdag)=A^{-a}(f(t A)-\text{id})A^{-\theta}A^\theta \xdag=A^{-(a+\theta)}(f(t A)-\text{id}) A^\theta \xdag.
\eeq
As a consequence,
\begin{align}\label{sect:decomp1}
\|A^{-a}(\xa-\xdag)\|_X=
\|A^{-(a+\theta)}(f(t A)-\text{id}) A^\theta \xdag\|_X\notag\\
\underbrace{=}_{\eqref{At}} t^{a+\theta }\|(t A)^{-(a+\theta)}(f(t A)-\text{id}) A^\theta \xdag\|_X\notag
\underbrace{=}_{\eqref{eq:gA} \& \psi(z):=\f{(f(z)-1)}{z^{\theta+a}}} t^{\theta+a}\|\psi(t A)A^\theta \xdag\|_X\notag\\
\leq t^{\theta+a}\|\psi(t A)\|_{X\to X}\|A^\theta \xdag\|_X\leq t^{\theta+a} C_\psi \|A^\theta \xdag\|_X,
\end{align}
where we  used  \eqref{sect:supCg} with  $g=\psi$.  Notice that the function $\psi$ belongs to $\E(S_\varphi)$ due to our assumptions on $f$.
Similarly,  one has by { \eqref{sect:commutative},  and the Lemma \ref{lemma:interp} (ii) }that
 \begin{align*}
 \|A \xa \|_X=&\|A f(t A) A^{-\theta} A^\theta \xdag\|_X\underbrace{=}_{\eqref{inclusion}\textrm{ with } g=f}\|A^{1-\theta} f(t A)  A^\theta \xdag\|_X\\
 \underbrace{=}_{\eqref{At}} & t^{\theta-1}\|(t A)^{1-\theta} f(t A)  A^\theta \xdag\|_X\underbrace{=}_{\eqref{eq:gA} \textrm{ with } g=\widetilde \psi}t^{\theta-1} \|\widetilde{\psi}(t A) A^\theta \xdag\|_X,
\end{align*}
where $\widetilde{\psi}(z):=z^{1-\theta}f(z)$.   Due to our assumption on $f$, $\widetilde{\psi}$ belongs also to $\E(S_\varphi)$. Then, by setting $g=\widetilde{\psi}$ in  \eqref{sect:supCg},  we obtain from the above inequality that
\beq\label{sect:decomp2}
\|A \xa \|_X\leq t^{\theta-1}\|\widetilde{\psi}(t A)\|_{X\to X} \|A^\theta  \xdag\|_X\leq  t^{\theta-1} C_{\widetilde{\psi}}\|A^\theta \xdag\|_X.
\eeq
Next, a combination of  \eqref{sect:commutative} and  \eqref{sect:supCg} yields
\[
\|A^\theta x_t \|_X=\| f(t A)A^\theta \xdag\|_X\leq \| f(t A)\|_{X\to X}\|A^\theta \xdag\|_X\leq C_f \|A^\theta \xdag\|_X,
\]
which implies
\beq\label{sect:4}
\|A^\theta (\xa-\xdag) \|_X\leq (1+C_f)\|A^\theta \xdag\|_X.
\eeq
On the other hand,    the moment inequality  \eqref{Moment} with $s=\theta$ and $r=0$, we have
\begin{align}
\|x_t-\xdag\|_{X}\leq L \|A^{\theta}(x_t-\xdag)\|_{X}^{\f{a}{a+\theta}} \|A^{-a}(x_t-\xdag )\|^{\f{\theta}{a+\theta}} \label{sect:50} \\
\underbrace{\leq}_{\eqref{sect:decomp1} \& \eqref{sect:4}} L (1+C_f)^{\frac{a}{a+\theta}}C_\psi^\frac{\theta}{a+\theta} t^\theta  \|A^\theta \xdag\|_X. \label{sect:5}
\end{align}
In view of \eqref{sect:decomp1}-\eqref{sect:5},   the claims  \eqref{conv:-a}-\eqref{bound:P} follow due to  $\|A^\theta x\|_X\leq E$ and  by choosing $C_{ap}>0$ to be large enough.

\smallskip
\noindent
{\bf Step 2}. In this step,  we construct an important auxiliary element and study its basic properties.
Owing to  \eqref{sect:invariance}, \eqref{xa}, and \eqref{conv:0},   the approximate element $x_t$ belongs to $D(F)$ as long as $t$ is small enough.  {Thus,
thanks to  \eqref{conv:0} and { Lemma \ref{lemma:func}}, Assumption \ref{ass1} yields that  the mapping $t \mapsto \|F(x_t) - F(\xdag)\|_Y$ is continuous at all sufficiently small $t$ and converges to zero as $t \downarrow 0$.  For this reason, we  may reduce $\delta \in (0,\delta_{\max}]$ (if necessary) and find   a positive real number $t_{\textup{aux}}(\delta) >0$ satisfying}
\beq\label{def:aus}
x_{t_{aux}(\delta)} \in D(F) \quad \textrm{and} \quad (C_{DP}-1)\delta   = \|F(x_{t_{aux}(\delta)})-F(\xdag)\|_Y
\eeq
with    $C_{DP}>1$ as   in \eqref{dp}.  In all what follows,   we simply write  $x_{\text{aux}}(\delta):=x_{t_{\textup{aux}}(\delta)}$ for the auxiliary element. By \eqref{def:aus} and   \eqref{eq:F}, we obtain that
\beq\label{xaus:-a}
\|A^{-a}(\xaus(\delta)-\xdag)\|_{X} = \| \xaus(\delta)-\xdag\|_{U}  \leq \f{1}{c_U}\|F(\xaus(\delta))-F(\xdag)\|_Y= \f{C_{DP}-1}{c_U}\delta.
\eeq
Applying   \eqref{bound:P} and \eqref{xaus:-a} to  \eqref{sect:50} yields that
\beq\label{xaus:0}
\|\xaus(\delta)-\xdag\|_{X}\leq  L(1+C_{ap})^{a/(a+\theta)} E^{a/(a+\theta)}\left(\f{C_{DP}-1}{c_U}\delta\right)^{\f{\theta}{a+\theta}}.
\eeq
In addition, the auxiliary element also satisfies
 $$
 (C_{DP}-1)\delta\underbrace{=}_{\eqref{def:aus}}\|F(\xaus(\delta))-F(\xdag)\|_Y\underbrace{\leq}_{\eqref{eq:F}} C_U\|A^{-a}(\xaus(\delta)-\xdag)\|_{X}\underbrace{\leq}_{\eqref{conv:-a}}  C_{ap} E C_Ut_{\text{aux}}(\delta)^{{\theta+a}},
 $$
 which gives a low bound for $t_{\text{aux}}(\delta)$ as follows:
 $$
 t_{\text{aux}}(\delta)\geq \left(\f{C_{DP}-1}{C_{ap} E C_U}\delta\right)^{\f{1}{a+\theta}}.
 $$
Making use of this lower bound and due to $0<\theta <1$, we eventually obtain
 \beq\label{xaus:1}
\|A \xaus(\delta)\|_{X}\underbrace{\leq}_{\eqref{conv:1} }  C_{ap} E t_{\textup{aux}}(\delta)^{{\theta-1}} \le   (C_{ap} E)^{\f{a+1}{a+\theta}} \left(\f{C_{DP}-1}{C_U}\delta\right)^{\f{\theta-1}{a+\theta}}.
\eeq

\smallskip
\noindent
{\bf Step 3}. In this step, we prove  \eqref{hatE}. According to \eqref{bound:P},
the auxiliary element $\xaus(\delta)$ satisfies $\|A^{\theta}(\xaus(\delta)-\xdag)\|_{X}\leq (1+C_{ap})E$ for all $\delta\in (0, \delta_{\max}]$. Therefore, we see that  \eqref{hatE} is valid  if we are able to prove that the existence of constant $E'>0$,  independent of $\xdag,\xda, \delta$, and $\kappa$, such that
$$
\|A^\theta(\xda-\xaus(\delta)) \|_{X}\leq E'.
$$
Since  $\xda$ is a  minimizer for the Tikhonov regularization problem \eqref{eq:Tik} and $ \xaus(\delta) \in \mathcal{D} = D(F) \cap V$ (due to \eqref{def:aus}, \eqref{eq:V}, and \eqref{xaus:1}),we have
\begin{align*}
(C_{DP}\delta)^{\nu}+\kappa \| \xda \|^{m}_{V}&\underbrace{=}_{\eqref{dp}}\|F(\xda)-y^\delta\|_Y^{\nu}+\kappa  \|\xda\|^{m}_{V}\\
&\, \, \leq  \|F(\xaus(\delta))-y^\delta\|_Y^{\nu}+\kappa  \| \xaus(\delta)\|^{m}_{V}\\
&\underbrace{\leq}_{\eqref{def:aus}}((C_{DP}-1)\delta+\delta)^{\nu}+\kappa  \|\xaus(\delta)\|^{m}_{V},
\end{align*}
which affirms that
\beq\label{VA}
\| \xda\|_{V}
\le \| \xaus(\delta) \|_{V} \quad \underbrace{\implies}_{\eqref{eq:V}}\quad  \|A \xda\|_X\leq C_A \|A \xaus(\delta) \|_{X}
\eeq
with $C_A:=\max\{\|\textup{id}\|_{D(A)\to V}, \|\textup{id}\|_{V\to D(A)}\}^2$. The above inequality, along with the triangle inequality, implies
\beq\label{VA2}
\|A(\xda-\xaus(\delta))\|_{X}\le \|A\xda \|_{X}+\| A\xaus(\delta)\|_{X}
\leq (1+C_A) \| A\xaus(\delta) \|_{X}.
\eeq
Now, applying \eqref{Moment} with $s=1$ and $r=\theta$, we obtain
\begin{align}\label{est3}
\|A^\theta(\xda-\xaus(\delta))\|_{X}\leq  L \|A(\xda-\xausd)\|^{\f{\theta+a}{a+1}}_{X} \|A^{-a}(\xda-\xausd)\|^{\f{1-\theta}{a+1}}_{X}\notag\\
\underbrace{\leq}_{\eqref{VA2}} L (C_A+1)^{\f{\theta+a}{a+1} } \|A \xaus(\delta) \|_{X}^{\f{\theta+a}{a+1}} \|A^{-a}(\xda-\xausd)\|_{X}^{\f{1-\theta}{a+1}}.
\end{align}
The first factor in the right-hand side of \eqref{est3} can be estimated by \eqref{xaus:1} as follows:
\beq\label{est4}
 \|A \xaus(\delta) \|_{X}^{\f{\theta+a}{a+1}}\leq
  C_{ap} E \left(\f{C_{DP}-1}{C_U}\delta\right)^{\f{\theta-1}{a+1}},
\eeq
whereas the second factor can be estimated as follows
\begin{align}\label{est5}
 \|A^{-a}(\xda-\xausd)\|_{X}\leq&  \|A^{-a}(\xda-\xdag)\|_{X}
 + \|A^{-a}(\xausd-\xdag)\|_{X} \notag \\
 = & \|\xda-\xdag\|_{U} + \|A^{-a}(\xausd-\xdag)\|_{X} \notag\\
&\hspace{-1cm} \underbrace{\leq}_{\eqref{eq:F} \& \eqref{xaus:-a}}\hspace{-0.3cm} \f{1}{c_U}\|F(\xda)-F(\xdag)\|_{Y}+ \f{C_{DP}-1}{c_U}\delta
  \underbrace{\leq}_{\eqref{ydelta} \&\eqref{dp}} \f{2C_{DP} }{c_U}\delta.
\end{align}
In conclusion,  \eqref{est3}-\eqref{est5} yield
\begin{align*}
\|A^{\theta}(\xda-\xaus(\delta))\|_{X}\leq L (C_A+1)^{\f{\theta+a}{a+1} } C_{ap} E  \left(\f{C_{DP}-1}{C_U}\right)^{\f{\theta-1}{a+1}}  \left(\f{2C_{DP} }{c_U}\right)^{\f{1-\theta}{a+1}} =: E'.
\end{align*}
This completes the proof.  \qed

\section{Proof of Theorem \ref{main}} \label{sec:proof2}

We now generalize the arguments used in   the previous section for the proof of Theorem \ref{main}.   Our goal is to prove the existence of a constant $ \hat{E}_*>0$, independent of $\delta$,  such that
\beq\label{hatE2}
\|\xda-\xdag\|_V\leq \hat{E}_*
\eeq
holds true for all sufficiently small $ \delta$. This estimate implies the claim of Theorem \ref{main}, since \eqref{hatE2} together with  Lemma \ref{lemma:conv} and \eqref{com-interp}  for $r=0$ and $s=\theta$  implies
\beq
\|\xda-\xdag\|_{X}\leq L\hat{E}_*^{\f{a}{a+\theta}} \left(\f{C_{DP}+1}{c_U}\delta\right)^{\f{\theta}{a+\theta}}.
\eeq
Let $ \delta \in (0,\delta_{\max} ]$ be arbitrarily fixed.
We define
\beq\label{xa2}
\xah:=\begin{cases} P_t \xdag \quad   &t \in (0,t_0],\\
x^\dag \quad &t=0.
\end{cases}
\eeq
According to  \eqref{proj} with $s=\theta$ and \eqref{ME},  it holds that
\beq\label{final1}
\|\xah-\xdag\|_U\leq C_{Proj} t^{a+\theta}E\quad \text{and}\quad \|\xah\|_V\leq C_{Proj}t^{1-\theta}E.
\eeq
On other hand,  Assumption \ref{scale}  (iv) ensures that
\beq\label{final2}
\|\xah\|_{X_\theta}\leq C_P\|\xdag\|_{X_\theta}\underbrace{\leq}_{\eqref{ME}}
C_PE,
\eeq
which implies
\beq\label{final3}
\|\xah-\xdag\|_{X_\theta}\underbrace{\leq }_{\eqref{ME}}
(C_P+1)E\leq C_{Proj} E.
\eeq
A combination of \eqref{com-interp}, with $r=0$ and $s=\theta$, \eqref{final1}, and \eqref{final3} yields
\beq\label{final4}
\|\xah-\xdag\|_{X}\leq L\|\xah-\xdag\|_{X_\theta}^{\f{a}{a+\theta}}\|\xah-\xdag\|_{U}^{\f{\theta}{a+\theta}}\leq L C_{Proj}t^\theta E.
\eeq
In view of \eqref{interior}, \eqref{xa2}, \eqref{final1}, and \eqref{final4},  $\hat{x}_t\in \mathcal D=D(F)\cap V$ holds true  as long as $t$ is small enough.  Now, if necessary, we   reduce $\delta$ to obtain an auxiliary element  $\xaush(\delta)\in \mathcal D$  satisfying \eqref{def:aus}
with $\xa$ replaced by $\xah$. Proceeding as in  {\bf Step 2} of the proof of Theorem \ref{main:sectorial} (see \eqref{xaus:-a} and \eqref{xaus:1}),  by \eqref{final1},
it follows that
\begin{align}
\|\xaush(\delta)-\xdag\|_{U}&\leq  \f{C_{DP}-1}{c_U}\delta,\label{ff1}\\
\| \xaush(\delta)\|_{V}&\leq  {(C_{Proj} E)^{^{\f{a+1}{a+\theta}} } }\left(\f{C_{DP}-1}{C_U}\delta\right)^{\f{\theta-1}{\theta+a}}.\label{ff2}
\end{align}
By similar arguments for \eqref{VA}, we also obtain
 \beq
\|\xda\|_V\leq \|\xaush(\delta)\|_V,
 \eeq
and consequently  \eqref{com-interp} with $r=\theta$ and $s=1$ implies that
\begin{align}\label{final6}
\|\xda-\xaush(\delta)\|_{X_\theta}\leq& L 2^{\f{\theta+a}{a+1}}\|\xaush(\delta)\|_V^{\f{\theta+a}{a+1}}
\|\xda-\xaush(\delta)\|_{U}^{\frac{1-\theta}{a+1}}\notag\\
\underbrace{\leq}_{ \eqref{ff2}}& L 2^{\f{\theta+a}{a+1}}  { C_{Proj} E}
 \left(\f{C_{DP}-1}{C_U}\delta\right)^{\f{\theta-1}{a+1}}\|\xda-\xaush(\delta)\|_{U}^{\frac{1-\theta}{a+1}}.
\end{align}
Similar as in \eqref{est5},  applying \eqref{eq:F} and \eqref{ff1} results in
$$
\|\xda-\xaush(\delta)\|_{U}\leq \frac{2C_{DP}}{c_U}\delta.
$$
Thus, inserting the above inequality  into \eqref{final6}, we conclude that
 the desired estimate \eqref{hatE2} holds with
$$
\hat{E}_*=
L 2^{\f{\theta+a}{a+1} }   C_{Proj} E \left(\f{C_{DP}-1}{C_U}\right)^{\f{\theta-1}{a+1}} \left(\f{2C_{DP}}{c_U}\right)^{\f{1-\theta}{a+1}}.
$$
This completes  the proof. \qed

\section{Applications}\label{sect:decomp}

 In this section, we present some applications of the abstract theoretical results from Theorems \ref{main:sectorial} and \ref{main}. The first three applications present different possible choices and settings for the governing Tikhonov-penalties: Hilbertian case, $\ell^1$-penalties,  and Besov-penalties. While these three examples are somewhat still too abstract, we present a more practical application of our theory in Section \ref{sec:app} regarding radiative problems \cite{engl89,engl2000,bangti11}. We also believe that our theory is applicable to the Tikhonov regularization method for nonlinear electromagnetic inverse or design problems arising for instance  in the context of ferromagnetics \cite{Yousept2013,LamYousept2020}, superconductivity \cite{Yousept2017,Yousept2020a}, electromagnetic shielding \cite{Yousept2020b}, and many others. Such problems   suffer from oversmoothing phenomena, mainly due to the low regularity and the lack of compactness properties in the associated function space for the electromagnetic fields. Another related application with oversmoothing character   can be found in elastic   full waveform inversion.  The application of our theory to all these problems  require however   investigations with different techniques  that would go beyond the scope of our present paper and will be considered in our upcoming research.

\subsection{Hilbertian penalties}\label{subsect:hilbert}

Let us first consider the case where both $X$ and $V$ are Hilbert spaces, and the embedding
	$V\embed X$ is dense and continuous.  By the Riesz representation theorem, there exists an isometric isomorphism
\beq
B_V:V\to V^*, \quad \la B_V u, v\ra_{V^*\times V}=(u,v)_{V} \quad \forall v\in V,
\eeq
which induces an unbounded operator $B_X:D(B_X)\subset X\to X$ by
\beq\label{A:Hilbert}
B_X u= B_Vu \quad \text{with} \,\,D(B_X)=\{u\in V\mid B_Vu\in X\}.
\eeq
\begin{Lemma}[{see \cite[{Theorems 2.1 and 2.34}]{Yagi}}]\label{pro:A}
	If  both $X$ and $V$ are Hilbert spaces such that the embedding
	$V\embed X$ is dense and continuous, then the operator $B_X:D(B_X)\subset X\to X$ defined by \eqref{A:Hilbert} is an invertible positive definite and self-adjoint operator. Moreover, its square root $A=B_X^{1/2}: D(A)\subset X\to X$ of $B_X$  fulfills
	$$
	D(A)=V \quad \text{with norm equivalence}\quad \|\cdot\|_V \sim \|A \cdot\|_X.
	$$
	\end{Lemma}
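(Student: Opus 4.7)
The plan is to realize $B_X$ as the Friedrichs-type operator associated with the bilinear form $(u,v)_V$ on the pre-Hilbert space $V$, which is densely and continuously embedded into $X$, and to read off all four required properties from standard spectral and form theory. The central identity $(B_X u,v)_X=(u,v)_V$ for $u\in D(B_X)$ and $v\in V$ is the bridge between the abstract Riesz isomorphism $B_V$ and the concrete $B_X$.

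First I would prove invertibility together with this bilinear identity. Given $f\in X$, the functional $v\mapsto (f,v)_X$ is continuous on $V$ since $V\embed X$, so the Riesz theorem inside $V$ yields a unique $u\in V$ with $(u,v)_V=(f,v)_X$ for all $v\in V$. This says exactly $B_V u=f$ in $V^*$, once $f\in X$ is identified with its image under the canonical embedding $X\embed V^*$ induced by the dense inclusion $V\embed X$. Hence $u\in D(B_X)$ with $B_X u=f$, so $B_X$ is surjective and its inverse $B_X^{-1}:X\to X$ is bounded. Setting $v=u$ gives $(B_X u,u)_X=\|u\|_V^2>0$ for $u\neq 0$, which is positive definiteness (and forces injectivity), and swapping $u,v\in D(B_X)$ yields symmetry. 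Density of $D(B_X)$ in $X$ follows by a short duality argument: any $x_0\in X$ orthogonal to $D(B_X)$ satisfies $0=(x_0,B_X^{-1}x_0)_X=\|B_X^{-1}x_0\|_V^2$, hence $x_0=0$.

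For self-adjointness the cleanest route is the classical von Neumann criterion that a densely defined symmetric operator whose range is the whole Hilbert space is automatically self-adjoint; the surjectivity of $B_X$ therefore upgrades symmetry to self-adjointness at once. Alternatively, I would verify $D(B_X^*)\subseteq D(B_X)$ by hand: if $B_X^* u^*=g$, set $w:=B_X^{-1}g\in D(B_X)$, test the defining identity of $B_X^*$ against arbitrary $v\in D(B_X)$, and use the surjectivity of $B_X$ to conclude $u^*=w$.

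Once $B_X$ is positive definite self-adjoint, the Borel functional calculus delivers a unique positive self-adjoint square root $A=B_X^{1/2}$ satisfying $\|Au\|_X^2=(B_X u,u)_X$ on $D(B_X)$. The identification $D(A)=V$ with the norm equivalence $\|\cdot\|_V\sim\|A\cdot\|_X$ is then the statement that $V$ is the form domain of $B_X$. I would argue this by observing that the closed quadratic form $q(u):=\|u\|_V^2$ on $V$ is densely defined and lower semicontinuous in $X$, its representing operator in the sense of the first representation theorem for forms is precisely $B_X$, and the form domain of a positive self-adjoint operator always coincides with $D(B_X^{1/2})$ with matching norms. I expect this last form-domain identification to be the main obstacle; the preceding steps reduce to careful bookkeeping of Riesz representation, whereas here one genuinely needs the Kato-type link between form domain and fractional-power domain, which is exactly what Yagi's Theorems 2.1 and 2.34 package in ready-to-use form.
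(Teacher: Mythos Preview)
Your proposal is correct and follows the standard form-theoretic route. Note, however, that the paper does not supply its own proof of this lemma: it is stated with a direct citation to \cite[Theorems 2.1 and 2.34]{Yagi} and no argument is given in the body. Your sketch is essentially a reconstruction of that reference's proof---Riesz representation to produce the operator, surjectivity plus symmetry yielding self-adjointness, and then Kato's second representation theorem identifying the form domain $V$ with $D(B_X^{1/2})$---so there is no methodological divergence to compare.
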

The above-defined operator $A: D(A)\subset X\to X$ is also  an invertible, positive definite, and self-adjoint operator. Therefore, \eqref{eq:V} is true for this case. Moreover,  there exits $\lambda_0>0$ such that  $\sigma(A)\subset [\lambda_0,\infty)$, and for any $\omega\in (0,\pi)$, the following estimate holds
\beq\label{resolvent}
\|R(z,A)\|_{X\to X}\leq \frac{1}{\text{dist}(z,\sigma(A))}\leq \frac{ 1}{ |z|\sin \omega} \,\quad \forall z\in \C\backslash \overline{S_{\omega}}\quad \forall\,\omega\in (0,\pi).
\eeq
On the other hand, it is well-known that there exists an spectral resolvent $\{E_\lambda\}_{\lambda\geq \lambda_0}$ for $(A,D(A))$. Let $\mathcal{B}$ be the algebra of all Borel measurable function over  $[\lambda_0,\infty)$, and $\mathcal{B}_\infty$ be the sub-algebra of $\mathcal{B}$ consists of all essentially bounded functions. Then, for every $f \in \mathcal{B}_\infty$, we can define an algebra homomorphism from
$\mathcal{B}$ into closed operators on $X$ by
$$
f(A):=\int_{\lambda_0}^{\infty} f(\lambda ) d E_\lambda x \quad \forall\, x\in D(f(A))
=\{x\in X\mid \int_{\lambda_0}^{\infty} |f(\lambda )|^2 d \| E_\lambda x \|^2<\infty \},
$$
satisfying  $\|f(A)\|_{X\to X}\leq \|f\|_\infty$. Furthermore,   a simplified  analogue of Lemma \ref{lemma:func} is obtained as follows:

\begin{enumerate}

\item[\textrm(a)] If $f\in \mathcal B_\infty$ and $s\geq 0$, then $A^s f(A)x=f(A) A^s x$ for all $x\in D(A^s)$.

\item[\textrm(b)]  If $f\in \mathcal B_\infty$, then $\sup_{t>0}\|f(tA)\|_{X\to X}\leq \|f\|_\infty$.

\item[\textrm(c)]  If $f \in \mathcal B_\infty$ and $\lambda\mapsto \lambda^s f(\lambda)$ belongs to $ \mathcal B_\infty$  for some $s>0$, then
$\|A^s f(A)\|_{X\to X}<\infty$.

\end{enumerate}
Therefore, in the Hilbertian  setting,  Theorem \ref{main:sectorial}   remains true
if we replace the condition $ f \in \E(S_\varphi)$  by  $ f \in \mathcal{B}_\infty$ being continuous.  This can be seen as a generalization  of Theorem \ref{main:sectorial} because $\mathcal{B}_\infty$ is larger than  $ \E(S_\varphi)$.
 As an instance,  we can choose the following non-holomorphic function
$$
f(\lambda)=\begin{cases}
1 \quad &\text{if } \lambda \in [0,1],\\
2 - \lambda  \quad &\text{if } \lambda \in [1,2],\\
0,\quad &\text{if } \lambda\geq 2.
\end{cases}
$$
We underline that the generalization of Theorem \ref{main:sectorial} in the Hilbert case using a continuous but not necessarily holomorphic function $f \in \mathcal{B}_\infty$ is important since
the  choice of $f$ influences the invariance requirement in \eqref{sect:invariance}.

\subsection{$\ell^1$-penalties}

In this section, we consider the case when $V=\ell^1$ but the solution $\xdag$ to \eqref{eq:opeq} lies in  $X=\ell^p(w)$ (see  Definition \ref{def:wl}) with $1<p<\infty$. Thus, in view of Lemma \ref{lemma:example},   \eqref{eq:V}  fails to hold such that Theorem \ref{main:sectorial} does not apply to this case. {  In the case of $V=\ell^1$,   one typically sets $m=1$ for the exponent of the penalty functional (cf. \cite{GerthHof19}), whereas the exponent $\nu \ge 1$ for the misfit functional may vary  depending on the  mathematical model.}

\begin{Definition}\label{def:wl}
Let $w: \mathbb{N}_0\to (0,\infty)$ be a  function.  For any $1\leq p<\infty$, we define
\beq
\ell^p(w):=\left\{\{x(n)\}_{n=0}^{\infty}\mid
\|x\|_{\ell_w^p}:=(\sum_{n\in \mathbb{N}_0} w(n)^{1-p}|x(n)|^p)^{1/p}<\infty \right\}
\eeq
and  for $p=\infty$
$$
\ell^\infty (w):=\left\{\{x(n)\}_{n=0}^{\infty}\mid
\|x\|_{\ell^\infty(w)}:=\sup_{n\in \mathbb{N}_0}|w(n)^{-1}x(n)|<\infty \right \}.
$$

\end{Definition}
If $p=1$, then $\ell^p(w)$ identical to $\ell^1$.  Also, we would like to mention that
$\ell^p(w)$ is reflexive if $1<p<\infty$.  In the following lemma, we verify Assumption \ref{scale}   for  Theorem \ref{main}.

\begin{Proposition}

Let $V=\ell^1$ and $w: \mathbb{N}_0\to (0,\infty)$   such that  $w(n_1) \le w(n_2)$ for all $n_1 < n_2$ with $\lim_{n\to \infty}w(n)=\infty$.   Furthermore, suppose   that there exists a non-negative, decreasing  function $f\in C^1[0,\infty)$ such that  $f(0)=1$, and
	\beq\label{growth}
 \sum_{n\in \mathbb{N}_0} w(n) f(\tau w(n)) \leq C_w \tau^{-1}  \quad \forall \tau \in (0,\tau_0]
\eeq
holds with some real numbers $\tau_0>0$ and $C_w>0$.  If $1< p<\infty$ and  $X=\ell^p(w)$, then Assumption \ref{scale}  holds with
$U=\ell^\infty(w)$, $a=\frac{1}{p-1}$,
$$
X_s:=\ell^{p_s}(w)\quad \text{with}\quad p_s:=\f{p}{1+s(p-1)}\quad s\in[0,1],
$$
and
\beq\label{ellproj}
(P_t x)(n):= f(t^{a+1} w(n)) x(n)\quad \forall  x\in \ell^{p}(w)
\eeq
for all $t\in (0,t_0]$ with $t_0:=\tau_0^{\f{1}{1+a}}$.

\end{Proposition}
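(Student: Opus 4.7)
The plan is to verify the five items of Assumption \ref{scale} one by one, exploiting only elementary properties of weighted sequence spaces together with the growth hypothesis \eqref{growth}. Items (i), (ii), and (iv) are the routine part: since $w$ is non-decreasing and unbounded with $w(n)\geq w(0)>0$, the weight $w(n)^{1-p}$ is bounded (as $1-p<0$), which yields $\ell^1\embed\ell^p(w)$; the pointwise bound $|x(n)|\leq w(n)^{(p_s-1)/p_s}\|x\|_{\ell^{p_s}(w)}$ gives $\ell^{p_s}(w)\embed\ell^\infty(w)$ and, more generally, the monotone embeddings $X_s\embed X_t$ for $t\leq s$; the identifications $X_0=\ell^p(w)=X$ and $X_1=\ell^1=V$ are direct from the definition of $p_s$; and the bound $f(0)=1$ together with the non-negativity and monotonicity of $f$ yields $|(P_tx)(n)|\leq |x(n)|$, which gives $C_P=1$ in \eqref{boud}, while the continuity of $t\mapsto P_tx$ follows by dominated convergence.

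For item (iii) the driving algebraic identity is $p_s/p_r=(r+a)/(a+s)$ with $a=1/(p-1)$, which is a direct calculation from the definition of $p_s$ using $(p-1)a=1$. Granting this identity, I would split $|x(n)|^{p_r}=|x(n)|^{p_s}\cdot|x(n)|^{p_r-p_s}$ and bound the second factor by $(w(n)\|x\|_U)^{p_r-p_s}$; summation and a $p_r$-th root then produce the clean inequality $\|x\|_{X_r}\leq \|x\|_{X_s}^{p_s/p_r}\|x\|_U^{1-p_s/p_r}$, which, in view of the identity above, is precisely \eqref{com-interp} with constant $L=1$.

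The main obstacle lies in item (v). For the first estimate $\|P_t-\textup{id}\|_{X_s\to U}\leq C_{Proj}t^{a+s}$ I would invoke the $C^1$-regularity of $f$, which yields $|1-f(\tau)|\leq\min(\|f'\|_\infty\tau,1)$, together with $|x(n)|/w(n)\leq\|x\|_{X_s}w(n)^{-1/p_s}$, and then distinguish the two regimes $t^{a+1}w(n)\leq 1$ and $t^{a+1}w(n)>1$; the clean numerical identities $(a+1)(1-s)(p-1)/p=1-s$ and $(a+1)/p_s=a+s$ (both consequences of $(p-1)a=1$) produce the factor $t^{a+s}$ in either regime. The genuinely delicate piece is the bound $\|P_t\|_{X_s\to V}\leq C_{Proj}t^{s-1}$, because it must invoke \eqref{growth} nontrivially. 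I would apply H\"older's inequality with conjugate exponents $q=p_s/(p_s-1)$ and $p_s$ to the factorization
\[
f(t^{a+1}w(n))|x(n)|=\bigl(f(t^{a+1}w(n))w(n)\bigr)^{1/q}\cdot\bigl(f(t^{a+1}w(n))^{1/p_s}w(n)^{-1/q}|x(n)|\bigr);
\]
the first factor is controlled by $(C_w t^{-(a+1)})^{1/q}$ directly via \eqref{growth}, the second by $\|x\|_{X_s}$ after dropping $f\leq 1$, and finally the key arithmetic $(a+1)/q=(a+1)(1-s)/(pa)=1-s$ converts the exponent of $t$ into the required $s-1$, completing the verification.
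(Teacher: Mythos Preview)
Your proof is correct and mirrors the paper's argument throughout; in particular the H\"older step for the second half of (v) is the same up to bookkeeping (you split $f=f^{1/q}f^{1/p_s}$ and use $f\leq 1$ in the $p_s$-factor, the paper keeps $f$ entirely in the $q$-factor and uses $f^{q}\leq f$ there---same outcome). One caveat: in the first half of (v) you write $\|f'\|_\infty$, but $f\in C^1[0,\infty)$ does not force $f'$ to be globally bounded; fortunately your two-regime split only needs $|1-f(\tau)|\leq C\tau$ on $\tau\in[0,1]$, so replace $\|f'\|_\infty$ by $\max_{[0,1]}|f'|$ and the argument goes through. The paper sidesteps this by observing directly that $\tau\mapsto(1-f(\tau^{p_s}))/\tau$ is bounded on $(0,\infty)$ (limit $0$ at the origin since $p_s>1$, decay at infinity since $0\leq f\leq 1$), which is a slightly cleaner packaging of the same estimate but not a different idea.
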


\begin{proof}

For any $q\geq 1$, it holds that
\beq\label{int1}
\|x\|_{\ell^\infty(w)}\leq \frac{1}{w(0)} \|x\|_{\ell^{q}(w)}\quad \forall x\in \ell^{q}(w),
\eeq
since $w(n)\geq w(0)$ for all $n\geq 0$.
Furthermore, if $1\leq q_1\leq q_2$,  then the embedding $\ell^{q_1}(w)\embed \ell^{q_2}(w)$ is continuous since
\begin{align}\label{int2}
\|x\|_{\ell^{q_2}(w)}&=(\sum_{n=0}^{\infty} w|w^{-1} x|^{q_2})^{1/q_2}
= (\sum_{n=0}^{\infty} w|w^{-1} x|^{q_1} |w^{-1} x|^{q_2-q_1})^{1/q_2}
\leq \|x\|_{\ell^{q_1}(w)}^{\f{q_1}{q_2}}\|x\|_{\ell^\infty(w)}^{1-\frac{q_1}{q_2}}\notag\\
&\leq w(0)^{q_1/q_2-1} \|x\|_{\ell^{q_1}(w)} \quad  \forall x\in \ell^{q_1}(w),
\end{align}
where we have used \eqref{int1} with $q=q_1$. Both \eqref{int1} and \eqref{int2} verify the conditions (i)-(ii) of Assumption \ref{scale}.
For any $s\geq 0$ and $r\in [0,s]$, let us choose $q_1=p_s$ and $q_2=p_r$ in \eqref{int2}, which yields due to $p=1+\frac{1}{a}$ that
\begin{align}
\|x\|_{\ell^{p_r}(w)}\leq \|x\|_{\ell^{p_s}(w)}^{\frac{1+r(p-1)}{1+s(p-1)}}
\|x\|_{\ell^\infty(w)}^{\frac{(p-1)(s-r)}{1+s(p-1)}}
= \|x\|_{\ell^{p_s}(w)}^{\frac{a+r}{a+s}}
\|x\|_{\ell^\infty(w)}^{\frac{s-r}{a+s}}\quad  \forall x\in \ell^{p_s}(w)=X_s.
\end{align}  This verifies the condition (iii) of Assumption \ref{scale}.

Obviously,  from \eqref{ellproj}  and the property that $\displaystyle \max_{\tau \in [0,\infty)}|f(\tau )|= 1$,
it follows   that
$$
|(P_t x) (n)|\leq |x(n)|\quad \forall\, t\in (0,t_0],  \,\, n\in \mathbb N_0, \,\text{and}\, x\in \ell^{p_s}(w)=X_s,
$$  which implies that  $\|P_t x\|_{X_s\to X_s}\leq 1=: C_p$ for all $t\in (0,t_0]$ and all $s\in [0,1]$.  From the continuity of
$f$ and $\displaystyle \max_{\tau \in [0,\infty)}|f(\tau )|=1$,   it follows by  \eqref{ellproj}
 that for every $x\in X=\ell^p(w)$, the mapping $t\mapsto P_t x$ is continuous from $(0,t_0]$ into $X$. Therefore, the requirement \eqref{boud}
is satisfied.

On the other hand, for every $s\in (0,1)$,  it follows from the (right) differentiability of $f$ at $0$  that  there exists a constant $C^*(s)$, only depending on $s$, such that
$$
\f{1-f(\tau^{p_s})}{\tau}=\f{f(0)-f(\tau^{p_s})}{\tau} \leq C^*(s)\quad \forall \tau>0,
$$
since $p_s>1$. By plugging $\tau= t^{a+s}w(n)^{\f{1}{p_s}}$ in the above inequality and using $p_s=\f{a+1}{a+s}$, we obtain
$$
1-f(t^{a+1}w(n))\leq C^*(s)  t^{a+s} w(n)^{\f{1}{p_s}}\quad \forall\, n\in \mathbb{N}_0.
$$
Hence, for all  $x\in \ell^{p_s}(w)=X_s$ and $n\in \mathbb{N}_0 $, it holds  that
\begin{align*}
w(n)^{-1}|(1-f(t^{a+1} w(n)))x(n)| \leq&    C^*(s)  t^{a+s} w(n)^{\f{1}{p_s}-1}|x(n)|\\
\leq&  C^*(s)  t^{a+s} (\sum_{n\in \mathbb{N}}
w(n)^{1-p_s}|x(n)|^{p_s})^{1/p_s},
\end{align*}
which ensures that
\begin{align*}
\|(\text{id}-P_t) x\|_{\ell^\infty(w)} \leq  C^*(s)   t^{a+s}\|x\|_{\ell^{p_s}(w)}.
\end{align*}
 On the other hand, H\"{o}lder's inequality implies
   that
 \begin{align*}
 \|P_t &x\|_{\ell^1}=\sum_{n\in\mathbb N_0}f(t^{a+1} w(n)) |x(n)|
 =\sum_{n\in\mathbb N_0} f(t^{a+1} w(n))  w(n)^{1-\frac{1}{p_s}}  w(n)^{\f{1}{p_s}-1}|x(n)| \\
 &\underbrace{\leq}_{1-\frac{1}{p_s}=\frac{1-s}{a+1}}
\left(\sum_{n\in\mathbb N_0}w(n)f(t^{a+1} w(n))^{\f{a+1}{1-s}}  \right)^{\frac{1-s}{a+1}}\left(\sum_{n\in\mathbb N_0}w(n)^{1-p_s}| x_n|^{p_s}\right)^{1/p_s}\\
&\,\,\,\,\underbrace{\leq}_{f \leq 1}
\left(\sum_{n\in\mathbb N_0}w(n)f(t^{a+1} w(n))  \right)^{\frac{1-s}{a+1}}\left(\sum_{n\in\mathbb N_0}w(n)^{1-p_s}| x_n|^{p_s}\right)^{1/p_s}\\
&\quad\leq C^{\f{1-s}{a+1}}_w t^{s-1}\|x\|_{\ell^{p_s}(w)},
 \end{align*}
 where we have used the growth rate  \eqref{growth} with $\tau=t^{a+1}$. In conclusion, the last condition \eqref{proj}  holds true.
\end{proof}

{
\subsection{Besov-penalties}

For any $s\in \R$ and $1<p<\infty$, we define the Bessel potential space
$$
H^s_p(\R^d):=\{u\in \mathcal{S}(\R^d)'\mid \|u\|^p_{H^s_p(\R^d)}:=\int_{\R^d}| \int_{\R^d} e^{i\xi x}\la \xi\ra^s \hat{u}(\xi) d\xi |^p dx<\infty  \},
$$
where $\la \xi\ra:= (1+|\xi|^2)^{\frac{1}{2}}$, $\hat{u}:=\F(u)$,  $\mathcal{F}:\mathcal{S}(\R^d)'\to \mathcal{S}(\R^d)'$ is the Fourier transform, and $\mathcal{S}(\R^d)'$ denotes
the tempered distribution space  (see, e.g.,  \cite{Yagi}). If $s$ is a non-negative integer,
$H^s_p(\R^d)$ is identical to the classical Sobolev space $W^{s,p}(\R^d;\mathbb{C})$. In particular,
$L^p(\R^d)=H^0_p(\R^d)$ is the space of  complex-valued $p$-integrable functions.
Throughout this subsection, let us define the operator $A_p:H^{1}_p(\R^d)\subset L^p(\R^d)\to L^p(\R^d)$ given by
$$
A_p u:=\mathcal{F}^{-1}(\la \xi\ra \hat{u}(\xi)):=\sqrt{(I-\Delta_p)}\, u,
$$
where $-\Delta_p: H^2_p(\R^d)\subset L^p(\R^d)\to L^p(\R^d)$ denotes the Laplace operator on $L^p(\R^d)$.
By a well-known result \cite[Theorem 8.2.1]{Hasse05}, it is an
invertible 0-sectorial operator. Moreover, for any holomorphic function
$f\in \E(S_\varphi)$ with $0<\varphi<\pi$,  the operator $f(A_p)\in B(L^p(\R^d))$ admits the characterization:
\beq\label{Apf}
f(A_p)u=\F^{-1}(f(\la \xi\ra ) \hat{u}(\xi))\quad \forall\, u\in L^p(\R^d),
\eeq
i.e., $f(A_p)$ can be characterized by the {\it Fourier multiplier} associated with the function $\R^d\backslash\{0\}\ni \xi\mapsto f(\la \xi \ra)$ (see e.g. \cite[Proposition 8.2.3]{Hasse}).
 In all what follows,  we equip $H^{s}_p(\R^d)$ with the norm of $D(A_p^s)$. Moreover,
its fractional power domain space $X_{A_p}^s$ can be characterize by Bessel potential spaces as follows:
\beq\label{Ap}
X_{A_p}^s=H^{s}_p(\R^d)  \quad \forall\, s\in \R
\eeq
(see e.g. \cite[Section 8.3]{Hasse}).

Let  $\phi_0\in C_0^\infty(\R^n)$ be such that $\phi_0(\xi)=1$ for $|\xi|\leq 1$ and $\phi_0(\xi)=0$ for $|\xi|\geq 2$. Moreover, $\phi_j(\xi):=\phi_0(2^{-j}\xi)-\phi_0(2^{-j+1}\xi)$ for $j\in \mathbb{N}$.  Then, we define
$$
S_j:  \mathcal{S}(\R^d)'\to C^\infty(\R^d),\quad
S_j u:= \mathcal{F}^{-1}[\phi_j(\xi)\hat{u}(\xi)] \quad \forall u\in  \mathcal{S}(\R^d)'\quad \forall j\in \mathbb{N}_0.
$$
For $s\in \R$, $1\leq p,q\leq \infty$, the  Besov space $B^s_{p,q}(\R^d)$ is defined by
$$
B^s_{p,q}(\R^d):=\{u\in  \mathcal{S}(\R^d)'\mid \|u\|_{B^s_{p,q}(\R^d)}<\infty\},
$$
where
$$
\|u\|_{B^s_{p,q}(\R^d)}:=\begin{cases}
\displaystyle \left(\sum_{j=0}^{\infty} 2^{jsq}\|S_j u\|^q_{L^p(\R^d)} \right)^{\f{1}{q}}\quad &q<\infty,\\
\displaystyle \sup_{j\geq 0}\{2^{js}\|S_j u\|_{L^p(\R^d)}\}\quad &q=\infty.
\end{cases}
$$
Other definitions of equivalent norms in Besov spaces can be found in  \cite{Triebel}.

\begin{Proposition}\label{the:besov}
Let $a_0\geq 0$, $a_1\in (0,1)$, $1\leq q_1\leq \infty$, and $1<p<\infty$. Suppose that
$$
X=L^p(\Omega),\quad V=B^{a_1}_{p,q_1}(\R^d),
$$
and there exists a function $f\in \E (S_\varphi)$ for some $0<\varphi<\pi$ such that the mappings  $z\mapsto z^{-(a_0+1)}(f(z)-1)$ and   $z\mapsto z f(z)$ are of class $\E (S_\varphi)$. Then,  Assumption \ref{scale} holds with  $ U:=H^{-a_0}_p(\R^d)$,
 $$
X_s:=B^{sa_1}_{p,q}(\R^d)\quad \forall s\in (0,1],\quad X_0:=X,\quad \textrm{and}\quad a:=\frac{a_0}{a_1},
 $$
 where  $q:=\max\{2,p\}$, and
 $$
 P_t:=f(t^{\f{1}{a_1}} A_p)\quad \forall \,t>0.
 $$
\end{Proposition}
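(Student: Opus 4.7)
The plan is to verify the five conditions of Assumption~\ref{scale} by exploiting the invertible $0$-sectorial operator $A_p:H^{1}_p(\mathbb{R}^d)\subset L^p(\mathbb{R}^d)\to L^p(\mathbb{R}^d)$, its fractional power domain identification $X^{s}_{A_p}=H^s_p(\mathbb{R}^d)$ from \eqref{Ap}, and the Fourier multiplier characterization \eqref{Apf} of $P_t=f(\tau A_p)$ with $\tau:=t^{1/a_1}$.

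Conditions (i)--(iii) will follow from classical facts about Besov and Bessel potential scales. The continuous embeddings $V=B^{a_1}_{p,q_1}\embed L^p\embed H^{-a_0}_p$ are standard, and (iii) will be derived from the real-interpolation identity $(L^p,H^{\alpha}_p)_{\theta,r}=B^{\theta\alpha}_{p,r}$ for $\alpha>0$, $\theta\in(0,1)$, $r\in[1,\infty]$, combined with \eqref{compreal}; an algebraic check produces the moment exponents $(r+a)/(a+s)$ and $(s-r)/(a+s)$ with $a=a_0/a_1$. For (iv), Lemma~\ref{lemma:func} together with \eqref{sect:commutative} and \eqref{sect:supCg} will give uniform boundedness of $P_t$ on each $H^\alpha_p$ (since $P_t$ commutes with $A_p^\alpha$), and Lemma~\ref{lemma:interp-real} will then transport this bound to $X_s=B^{sa_1}_{p,q}=(L^p,H^{a_1}_p)_{s,q}$; the continuity of $t\mapsto P_t$ in $B(L^p)$ is again provided by Lemma~\ref{lemma:func}.

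The core of the argument is (v). I will first establish both estimates on Bessel potential scales and then transfer them to Besov scales. Fixing $s\in(0,1)$, define
\[
\tilde g(z):=z^{-(a_0+sa_1)}(f(z)-1),\qquad \tilde h(z):=z^{a_1(1-s)}f(z).
\]
Using the two hypotheses $z^{-(a_0+1)}(f(z)-1),\,zf(z)\in\E(S_\varphi)$, together with Lemma~\ref{Lemma:es} and the strict inequalities $0<sa_1<1$, $a_0+sa_1>0$, and $0<a_1(1-s)<1$, one verifies that $\tilde g,\tilde h\in\E(S_\varphi)$ (bounded with polynomial limits $0$ at both $0$ and $\infty$; the hypothesis $zf\in\E$ secures the decay at infinity, while $z^{-(a_0+1)}(f-1)\in\E$ secures the decay at zero). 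The functional calculus identities \eqref{sect:commutative} and \eqref{eq:gA} then produce the operator equalities
\[
A_p^{-a_0}(P_t-\textup{id})A_p^{-sa_1}=\tau^{a_0+sa_1}\tilde g(\tau A_p),\qquad A_p^{a_1}P_tA_p^{-sa_1}=\tau^{-a_1(1-s)}\tilde h(\tau A_p),
\]
which combined with \eqref{sect:supCg} yield the Bessel-to-Bessel estimates
\[
\|P_t-\textup{id}\|_{H^{sa_1}_p\to H^{-a_0}_p}\le C\,t^{a+s},\qquad \|P_t\|_{H^{sa_1}_p\to H^{a_1}_p}\le C\,t^{s-1}.
\]
To reach the Besov source $X_s=B^{sa_1}_{p,q}$, I will apply Lemma~\ref{lemma:interp-real} to two Bessel pairs $(H^{s_0a_1}_p,H^{s_1a_1}_p)$ with $0<s_0<s<s_1<1$ chosen so that $(H^{s_0a_1}_p,H^{s_1a_1}_p)_{\theta,q}=B^{sa_1}_{p,q}$ for the appropriate $\theta$, keeping the target trivial so that the rates $t^{a+s}$ and $t^{s-1}$ survive. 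A final Littlewood--Paley embedding $H^{a_1}_p\embed B^{a_1}_{p,q_1}$ (valid for $q_1\ge\max\{2,p\}$, with a complementary real-interpolation adjustment otherwise) will then deliver both estimates in \eqref{proj}.

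The hard part will be twofold: the careful bookkeeping of exponents needed to certify $\tilde g,\tilde h\in\E(S_\varphi)$ jointly from the two polynomial-type hypotheses on $f$, and the clean interpolation-theoretic passage from the Bessel-potential estimates to the Besov-scale estimates so that the rates $t^{a+s}$ and $t^{s-1}$ are preserved when the source $H^{sa_1}_p$ is replaced by $B^{sa_1}_{p,q}$ and the target $H^{a_1}_p$ by $B^{a_1}_{p,q_1}$.
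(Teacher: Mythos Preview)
Your plan is correct and will go through, but it differs from the paper's route in where the functional calculus is invoked. The paper applies Lemma~\ref{lemma:func} only at the \emph{endpoint} exponents: it records the four bounds $\|f(\tau A_p)-\textup{id}\|_{U\to U}$, $\|f(\tau A_p)-\textup{id}\|_{D(A_p)\to U}$, $\|f(\tau A_p)\|_{X\to X}$, $\|f(\tau A_p)\|_{X\to D(A_p)}$ straight from the hypotheses $f,\,z^{-(a_0+1)}(f-1),\,zf\in\E(S_\varphi)$, and then derives \emph{all} of \eqref{proj} by pure real interpolation via the identities $X_s=(D(A_p),U)_{(1-a_1s)/(1+a_0),q}=(D(A_p),X)_{1-s,q}$, $V=(D(A_p),X)_{1-a_1,q_1}$, and $X_s=(X,V)_{s,q}$; this handles every $q_1\in[1,\infty]$ uniformly and never introduces your $s$-dependent auxiliary functions $\tilde g,\tilde h$. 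Your route instead invokes the calculus at the intermediate exponents $a_0+sa_1$ and $a_1(1-s)$ to get Bessel-to-Bessel bounds and then interpolates between two Bessel sources to land on $B^{sa_1}_{p,q}$; this is legitimate (and your $\tilde g,\tilde h\in\E(S_\varphi)$ check via Lemma~\ref{Lemma:es} is routine), but the passage to the target $V=B^{a_1}_{p,q_1}$ is where you pay: the embedding $H^{a_1}_p\embed B^{a_1}_{p,q_1}$ genuinely fails for $q_1<\max\{2,p\}$, so your ``complementary adjustment'' must be a second interpolation of the \emph{target} between $H^{a_1-\epsilon}_p$ and $H^{a_1+\epsilon}_p$ (yielding $B^{a_1}_{p,q_1}$ for arbitrary $q_1$, with rates $t^{s-1\pm\epsilon/a_1}$ whose geometric mean recovers $t^{s-1}$) before the source interpolation. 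That works, but the paper's endpoint-only approach avoids this case distinction entirely.
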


\begin{Remark}

 If $a_0=1$, then the mappings
$$
z\mapsto f(z):=e^{-z^2},\quad z\mapsto z^{-2}(f(z)-1),\quad z\mapsto z f(z)
$$
are of class $\E (S_\varphi)$ for any $0<\varphi<\f{\pi}{4}$ by { Lemma \ref{Lemma:es}}.
  Thus, the function $f$ satisfies all assumptions of Proposition \ref{the:besov}. According to \cite[Proposition 8.3.1.]{Hasse}, it holds that
$$
(P_t u)(x)=\frac{e^{-\f{2}{a_1} t}}{(4\pi t^{\f{2}{a_1}} )^{-\f{d}{2}}}\int_{\R^d} e^{-\f{|x-y|^2}{ t^{2/a_1}}}u(y) dy \quad \forall u\in L^p(\Omega),
$$
 which  gives an explicit expression of decomposition operators.

\end{Remark}

\begin{proof}
Due to the well-known results \cite[{Subsection 2.8.1. Remark 1 \& 2}]{Triebel},  the embeddings   $B^{a_1}_{p,q_1}(\R^d)\embed L^p(\R^d)\embed H^{-a_0}_p(\R^d)$, and
$B^{s_1}_{p,q}(\R^d)\embed B^{s_2}_{p,q}(\R^d)$ for  any $s_1>s_2$ are continuous. Thus, both (i) and
(ii) of Assumption \ref{scale} are valid.

On the other hand,  according to Theorem 2 in \cite[Section 2.4.2]{Triebel}, for any $-\infty<s_0,s_1<\infty$, $s_0\neq s_1$, $1<p<\infty$,  $1\leq q_0, q\leq \infty$ and $\Theta\in (0,1)$,  it holds for $s=(1-\Theta)s_0+\Theta s_1$ that
\beq\label{besov0}
( B^{s_0}_{p,q_0}(\R^d), H^{s_1}_p(\R^d))_{\Theta, q}=( H^{s_0}_{p}(\R^d), H^{s_1}_p(\R^d))_{\Theta, q}=B^{ s}_{p,q}(\R^d)
\eeq
with equivalent norms.  In particular, \eqref{besov0} ensures that, for all $s\in (0,1]$ and $r\in [0,s)$,
\beq
( B^{a_1s}_{p,q}(\R^d), H^{-a_0}_p(\R^d))_{\f{s-r}{ s+a}, q}=B^{a_1 r}_{p,q}(\R^d),
\eeq
which according to  \eqref{compreal} yields
\beq\label{besov1}
B^{a_1 r}_{p,q}(\R^d) \in J_{\frac{s-r}{s+a}} ( B^{a_1s}_{p,q}(\R^d),H^{-a_0}_p(\R^d) ).
\eeq
That is,  $X_r\in   J_{\frac{s-r}{s+a}} (X_s,U)$ for all
$s\in (0,1]$ and $r\in (0,s)$, and  $B^{0}_{p,q}(\R^d)\in   J_{\frac{s}{a+s}} (X_s,U)$
for $s\in (0,1]$. Then, from the continuous embedding
$
B^{0}_{p,q}(\R^d)\embed X
$
it follows that $X \in   J_{\frac{s}{a+s}} (X_s,U)$
for $s\in (0,1]$. In conclusion, Assumption  \ref{scale} (iii) is valid.

By the arguments used in \eqref{sect:decomp1}-\eqref{sect:decomp2}
and using { \eqref{Ap} as well as } the facts that $z\mapsto f(z)$, $z\mapsto zf(z)$, $ z\mapsto z^{-a_0-1} (f(z)-1)$ are of class $\E(S_\varphi)$, we can infer that there exists a constant $C_{ap}>0$ such that
\begin{align}
 \|(f(t^{\f{1}{a_1}} A_p)-\textrm{id})\|_{U\to U}=\|(f(t^{\f{1}{a_1}} A_p)-\textrm{id})\|_{X\to X}&\leq C_{ap} \quad\forall\, t>0\label{besov2}, \\
{\|(f(t^{\f{1}{a_1}} A_p)-\textrm{id})x\|_{D(A_p)\to U}=\|A_p^{a_0+1}(f(t^{\f{1}{a_1}} A_p)-\textrm{id})x\|_{X\to X}}
&\leq C_{ap} t^{\f{a_0+1}{a_1}}  \quad\forall\, t>0 \label{besov3} ,\\
\|f(t^{\f{1}{a_1}} A_p)\|_{X\to D(A_p)}= \|A_p f(t^{\f{1}{a_1}} A_p)\|_{X\to X}&\leq C_{ap} t^{-1/a_1} \, \forall\, t>0   \label{besov4},\\
\|f(t^{\f{1}{a_1}} A_p)\|_{X\to X}=\|f(t^{\f{1}{a_1}} A_p)\|_{D(A_p)\to D(A_p)}&\leq C_{ap}  \quad\forall\, t>0   \label{besov5}.
\end{align}
Due to \eqref{besov0}, the interpolation result
\beq\label{inter:Xs}
X_s=(D(A_p),U)_{\f{1-a_1s}{1+a_0},q}=(D(A_p),X)_{1-s,q}
\eeq
holds with equivalent norms.  Selecting $\mathcal{X}_1=\mathcal{X}_2=D(A_p)$, $\mathcal{Y}_1=\mathcal{Y}_1=X$,
$\tau=1-s$  in
Lemma \ref{lemma:interp-real}, we have  by \eqref{inter:Xs}-\eqref{besov5} that for all $s\in (0,1)$
\beq
\|f(t^{\f{1}{a_1}} A_p)\|_{X_s\to X_s}
\leq C \|f(t^{\f{1}{a_1}} A_p)\|_{(D(A_p),X)_{1-s,q} \to (D(A_p),X)_{1-s,q} }
\leq C C_{ap}.
\eeq
Here and afterforward  $C>0$ denote a genetic constant associated with the embedding between equivalent Banach spaces. Hence,  Assumption \ref{scale} (iv) holds.

Then, by taking $\mathcal{Y}_1=\mathcal{Y}_2=U$, $\mathcal X_1=D(A_p)$, $\mathcal{X}_2=U$,  $\tau=\f{1-a_1s}{1+a_0}$
 in
Lemma \ref{lemma:interp-real}  and recalling that $(U,U)_{\f{1-a_1s}{1+a_0},q}=U$ holds with equivalent norms,  we conclude that
\begin{align}\label{besov7}
\|f(t^{\f{1}{a_1}} A_p)-\textrm{id}\|_{X_s\to U}\leq C\|&f(t^{\f{1}{a_1}} A_p)-\textrm{id}\|^{\f{a_1s+a_0}{1+a_0}}_{D(A_p) \to U} \|f(t^{\f{1}{a_1}} A_p)-\textrm{id}\|_{U \to U}^{\f{1-a_1s}{1+a_0}}\notag\\
\underbrace{\leq}_{ \eqref{besov2}-\eqref{besov3}}&  C  C_{ap}t^{s+a},
\end{align}
which proves the first inequality in  (v) of Assumption \ref{scale}.
Similarly, in view of the interpolation result
\beq\label{besov8}
(D(A_p),X)_{1-a_1,q_1}=V \,\, \text{with equivalent norm}
\eeq
due to the second identity of \eqref{besov0} and  \eqref{besov4}-\eqref{besov5}, we  obtain by Lemma \ref{lemma:interp-real} that
\beq\label{besov9}
\|f(t^{\f{1}{a_1}} A_p)\|_{X\to V}\leq C \|f(t^{\f{1}{a_1}} A_p)\|_{X\to X}^{1-a_1 }\|f(t^{\f{1}{a_1}} A_p)\|_{X\to D(A_p)}^{a_1 }
\leq C C_{ap} t^{-1}.
\eeq
On the other hand,  Lemma \ref{lemma:interp-real} in combination with  \eqref{besov5} and the interpolation result \eqref{besov8}  implies
\beq\label{besov10}
\|f(t^{\f{1}{a_1}} A_p)\|_{V\to V}\leq  C\|f(t^{\f{1}{a_1}} A_p)\|_{D(A_p)\to D(A_p)}^{1-a_1 }\|f(t^{\f{1}{a_1}} A_p)\|_{X\to X}^{a_1 }
\leq CC_{ap}
\eeq
for $V=(X,D(A_p))_{a_1,q_1}$.
Choosing $\mathcal X_1=X$, $\mathcal X_2=V$ and $\mathcal Y_1=\mathcal Y_2=V$ in Lemma \ref{lemma:interp-real}, we finally obtain
\beq\label{besov11}
\|f(t^{\f{1}{a_1}} A_p)\|_{X_s\to V}\leq C\|f(t^{\f{1}{a_1}} A_p)\|_{X \to V}^{1-s }\|f(t^{\f{1}{a_1}} A_p)\|_{V \to V}^{s}
\!\!\!\!\!\underbrace{\leq}_{  \eqref{besov9}-\eqref{besov10}}\!\!\!\!\!\!\! CC_{ap}t^{s-1}.
\eeq
This proves the second inequality in Assumption \ref{scale} (v).

\end{proof}

}

\subsection{Inverse radiative problem}\label{sec:app}

	In this section, we  apply  our theory to   an inverse radiative problem. Although convergence rates have been well investigated for the Tikhonov regularization method in the inverse elliptic or parabolic radiativity problems (see e.g.\cite{chen2020,engl89,engl2000,dinh10,bangti11,jiang12}), all these convergence results are established under the assumption that unknown true solution has a finite  penalty value. In the following, we   focus on the case that the unknown radiativity  fails to have a finite penalty value.

As a preparation, let us recall  the notion of  the Bessel potential space on a bounded domain.
 For $p\in (1,\infty)$ and a bounded domain $ U \subset \R^d$ with
a Lipschitz boundary $\p U$,  the space  $H^s_p(U)$ with a possibly non-integer exponent $s\geq 0$ is defined as the space
of all complex-valued functions $v\in L^p(U)$ satisfying
$\hat v_{\vert U}=v$ for some $\hat v \in H^s_p(\R^d)$, endowed with the norm
\beq\label{def:sobolev}
\|v\|_{H^s_p(U)}:=\inf_{\substack{\hat v_{\vert U}=v \\ \hat v \in H^s_p(\R^d)}}\|\hat v\|_{H^s_p(\R^d)}.
\eeq
If $s$ is a non-negative integer, then  $H^s_p(U)$  coincides with the classical Sobolev space $W^{s,p}(U)$. The space  $\mathring{H}^s_p(U)$ denotes the closure of $
C_0^\infty(U)$ in $H^s_p(U)$, and the dual of $\mathring{H}^s_p(U)$ is denoted by $H^{-s}_{q}(U)$ with $q=\f{p}{p-1}$.

\begin{Lemma}\label{lemma:mul}
	Let $1<p<\infty$. Then,   it holds that
	$$
	\|fg\|_{H^{-1}_{p}(\Omega)}\leq \|f\|_{W^{1,\infty}(\Omega)}\|g\|_{H^{-1}_p(\Omega)} \quad \forall\, (f,g)\in W^{1,\infty}(\Omega)\times L^p(\Omega).
	$$

\end{Lemma}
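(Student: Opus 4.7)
The strategy is a standard duality argument. Since $H^{-1}_p(\Omega)$ is defined in the paper as the dual of $\mathring{H}^1_q(\Omega)=\mathring W^{1,q}(\Omega)$ with $q=p/(p-1)$, and since $fg\in L^p(\Omega)\embed H^{-1}_p(\Omega)$ (because $f\in L^\infty$ and $g\in L^p$), I would write
\[
\|fg\|_{H^{-1}_p(\Omega)}=\sup_{\substack{\phi\in \mathring{H}^1_q(\Omega)\\ \|\phi\|_{H^1_q(\Omega)}\leq 1}}\left|\int_\Omega fg\,\phi\,dx\right|.
\]
For such a test function $\phi$, Sobolev embedding gives $\phi\in L^q(\Omega)$, so $f\phi\in L^q(\Omega)$ and the integral may be reshuffled as $\int_\Omega g\,(f\phi)\,dx$. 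Interpreting this as the duality pairing of $g\in H^{-1}_p(\Omega)$ with $f\phi\in \mathring{H}^1_q(\Omega)$ yields the pointwise bound
\[
\left|\int_\Omega fg\,\phi\,dx\right|\leq \|g\|_{H^{-1}_p(\Omega)}\|f\phi\|_{H^1_q(\Omega)}.
\]

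The heart of the proof is therefore the multiplicative estimate $\|f\phi\|_{H^1_q(\Omega)}\leq \|f\|_{W^{1,\infty}(\Omega)}\|\phi\|_{H^1_q(\Omega)}$ with constant one. Since $q$ is an integer index, $H^1_q(\Omega)$ coincides with $W^{1,q}(\Omega)$, so I would work with the equivalent norms $\|\phi\|_{H^1_q(\Omega)}:=\|\phi\|_{L^q}+\|\nabla\phi\|_{L^q}$ and $\|f\|_{W^{1,\infty}(\Omega)}:=\|f\|_{L^\infty}+\|\nabla f\|_{L^\infty}$. The Leibniz rule $\nabla(f\phi)=(\nabla f)\phi+f\nabla\phi$ then delivers the pointwise bounds $|f\phi|\leq \|f\|_{L^\infty}|\phi|$ and $|\nabla(f\phi)|\leq \|\nabla f\|_{L^\infty}|\phi|+\|f\|_{L^\infty}|\nabla\phi|$, and integration in $L^q$ followed by regrouping gives
\[
\|f\phi\|_{L^q}+\|\nabla(f\phi)\|_{L^q}\leq (\|f\|_{L^\infty}+\|\nabla f\|_{L^\infty})(\|\phi\|_{L^q}+\|\nabla\phi\|_{L^q}),
\]
which is exactly the desired estimate with constant one.

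Finally I must verify that $f\phi$ is an admissible test element, i.e. $f\phi\in \mathring{H}^1_q(\Omega)$. For $\phi\in \mathring{H}^1_q(\Omega)$ there exist $\phi_n\in C_0^\infty(\Omega)$ with $\phi_n\to\phi$ in $W^{1,q}(\Omega)$; each product $f\phi_n$ has compact support inside $\Omega$ and lies in $W^{1,q}(\Omega)$, hence in $\mathring W^{1,q}(\Omega)$, and the Leibniz estimate above shows that $f\phi_n\to f\phi$ in $W^{1,q}(\Omega)$, so $f\phi\in \mathring{H}^1_q(\Omega)$. Combining this membership with the two displayed inequalities and taking the supremum over $\phi$ in the unit ball of $\mathring{H}^1_q(\Omega)$ yields the claim. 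The only mildly delicate point is the calibration of the equivalent norms on $W^{1,\infty}(\Omega)$ and $H^1_q(\Omega)$ so that the final constant is exactly one, but as shown above the sum norms on both sides do the job.
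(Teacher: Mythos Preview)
Your proposal is correct and follows essentially the same route as the paper: duality against $\mathring{H}^1_q(\Omega)$, the reshuffling $\int fg\,\phi=\int g\,(f\phi)$, and the multiplier estimate $\|f\phi\|_{\mathring{H}^1_q}\le\|f\|_{W^{1,\infty}}\|\phi\|_{\mathring{H}^1_q}$ obtained from the Leibniz rule. The paper's proof is just a two-line version of yours; your additional verification that $f\phi\in\mathring{H}^1_q(\Omega)$ and your remark on the choice of sum norms to obtain constant one are useful details that the paper leaves implicit.
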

\begin{proof}
Let $f\in  W^{1,\infty}(\Omega)$,  $g\in  L^p(\Omega)$, and $q=\frac{p}{p-1}$.
For any $v\in \mathring{H}_q^1(\Omega)$, an interplay of Libniz's rule (see e.g. \cite{Yagi}) and H\"{o}lder's inequality  yields
	$$
	\|fv\|_{\mathring{H}_q^1(\Omega)}\leq \|f\|_{W^{1,\infty}(\Omega)}\|v\|_{\mathring{H}^1_q(\Omega)}.
	$$
	This inequality together with the fact that
	$$
	\|fg\|_{H^{-1}_p(\Omega)}=\sup_{\|v\|_{\mathring{H}^1_q(\Omega)}=1}|\int_{\Omega} f g v dx|
	$$
	implies the desired result.
	
\end{proof}
Let us consider the following elliptic equation:
\begin{equation}
\left\{ \begin{array}{rlllc}
-\nabla \cdot(a \nabla u )+(\chi_0+\chi )u &=& f  &\text{in}& \Omega,\\
u&=&g &\text{on}&\partial\Omega.
\end{array}
\right. \label{q1}
\end{equation}
In this setting,  $\Omega\subset \mathbb{R}^d$ ($d\geq 2$) is a bounded domain with a $C^2$-boundary $\p\Omega$,  $a\in C^1(\overline{\Omega})$ satisfying  $\min\limits_{x\in \overline{\Omega}}a(x)>0$,
$g\in W^{2-1/p,p}(\p\Omega)$ (see \cite{gri85}) and $f \in L^p(\Omega)$ with $d<p<\infty$.
Moreover, $\chi_0\in L^\infty(\Omega)$ is a non-negative function.  We are interested in recovering the unknown radiativity $\chi$ in the  following admissible set:
$$
K_R:=\{\chi \in L^p(\Omega)\mid 0\leq \chi(x)\leq R\,\,\text{for}\,\text{a.e.}\, x\in \Omega\},\quad R\in (0,\infty)
$$
 from the noisy data $ u^\delta\in H^1_p(\Omega)$ of  the true solution $u^\dag$  satisfying
\beq\label{noisy:pde}
\|u^\delta-u^\dag\|_{ H^1_p(\Omega)}\leq \delta,
\eeq
where $\delta>0$ represents the noisy level. In the following, we summarize the well-posed result
for the elliptic equation \eqref{q1}.

\begin{Lemma}[see\cite{elschner2007optimal}]\label{pro:elliptic}
	 For every $\chi\in K_R$, the elliptic equation \eqref{q1} admits a unique strong solution $u(\chi)\in H^2_p(\Omega)$ satisfying
	\beq\label{ukb}
	C_{E}:=\sup_{\chi\in K_R}\|u(\chi)\|_{H^2_p(\Omega)}<\infty.
	\eeq
	Moreover, the operator $B_\chi:\mathring{H}^1_p(\Omega)\to H^{-1}_p(\Omega)$, defined by
	\beq\label{eq:Bp}
	\langle B_\chi u,v\rangle_{ H^{-1}_p(\Omega),\mathring H^1_q(\Omega)}:=\int_\Omega a \nabla u\cdot \nabla v+(\chi_0+\chi) uv dx \quad \forall (u,v)\in \mathring H^1_p(\Omega)\times \mathring H^1_q(\Omega)
	\eeq
	is a topological isomorphism satisfying
	\beq
	C_{B}:=\sup_{\chi\in K_R}\max\{\|B_\chi\|_{ \mathring{H}^1_p(\Omega)\to H^{-1}_p(\Omega) },\|B_\chi^{-1}\|_{H^{-1}_p(\Omega)\to \mathring{H}^1_p(\Omega)}\}<\infty.
	\eeq

\end{Lemma}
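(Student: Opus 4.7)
The plan is to treat the two assertions of the lemma separately, both via standard elliptic $L^p$-theory, with careful attention to making every constant uniform over $\chi\in K_R$.

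First, for the strong solvability of \eqref{q1}, I would reduce to homogeneous boundary data. Because $\partial\Omega$ is $C^2$ and $g\in W^{2-1/p,p}(\partial\Omega)$, a standard trace-extension theorem produces $G\in H^2_p(\Omega)$ with $G_{\vert\partial\Omega}=g$ and $\|G\|_{H^2_p(\Omega)}\le c\|g\|_{W^{2-1/p,p}(\partial\Omega)}$. Setting $\tilde u:=u-G$, problem \eqref{q1} becomes a homogeneous Dirichlet problem with source term $\tilde f := f + \nabla\cdot(a\nabla G) - (\chi_0+\chi)G$. Since $a\in C^1(\overline{\Omega})$ and $0\le \chi\le R$, one has $\tilde f \in L^p(\Omega)$ with $\|\tilde f\|_{L^p(\Omega)}\le c(R)(\|f\|_{L^p(\Omega)}+\|g\|_{W^{2-1/p,p}(\partial\Omega)})$. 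The Agmon-Douglis-Nirenberg $L^p$-theory for uniformly elliptic second-order divergence operators with $C^1$ principal part on $C^2$ domains then yields a unique strong solution $\tilde u\in H^2_p(\Omega)\cap \mathring{H}^1_p(\Omega)$ satisfying $\|\tilde u\|_{H^2_p(\Omega)}\le c(\|\tilde f\|_{L^p(\Omega)}+\|\tilde u\|_{L^p(\Omega)})$. Because $\chi_0+\chi\ge 0$, the weak maximum principle absorbs the $L^p$-term on the right, and returning to $u=\tilde u+G$ produces the uniform bound \eqref{ukb}, whose constant depends only on $p$, $\Omega$, $\min a$, $\|a\|_{C^1(\overline{\Omega})}$, $\|\chi_0\|_{L^\infty}$, and $R$.

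For the operator $B_\chi$ defined by \eqref{eq:Bp}, boundedness with $\|B_\chi\|_{\mathring{H}^1_p(\Omega)\to H^{-1}_p(\Omega)}\le c(\|a\|_{L^\infty}+\|\chi_0\|_{L^\infty}+R)$ follows directly from H\"older's inequality and the Sobolev embedding $\mathring{H}^1_p(\Omega)\hookrightarrow L^{p}(\Omega)$, giving the first half of $C_B$ uniformly. To invert $B_\chi$, given $F\in H^{-1}_p(\Omega)$, the equation $B_\chi u=F$ is the weak form of a Dirichlet problem with divergence-form data; unique solvability in $\mathring{H}^1_p(\Omega)$ together with the estimate $\|u\|_{\mathring{H}^1_p(\Omega)}\le c\|F\|_{H^{-1}_p(\Omega)}$ is precisely the $L^p$-regularity content invoked from \cite{elschner2007optimal}. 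Uniformity in $\chi\in K_R$ again holds because the constants in that theory depend only on $p$, $\Omega$, $a$, $\chi_0$, and the $L^\infty$-bound $R$.

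The most delicate point is the uniformity of $\|B_\chi^{-1}\|_{H^{-1}_p(\Omega)\to \mathring{H}^1_p(\Omega)}$ when $p\neq 2$. For $p=2$ the bound is immediate from Lax-Milgram applied to the coercive bilinear form on $\mathring{H}^1_2(\Omega)$, with coercivity constant independent of $\chi\in K_R$. For general $p$ one must either interpolate, using Sneiberg-type stability of isomorphisms on the complex interpolation scale between $p=2$ and nearby exponents, or invoke direct $L^p$-resolvent estimates for divergence-form elliptic operators with $C^1$ principal coefficient on $C^2$ domains. The $C^2$-regularity of $\partial\Omega$ and $C^1$-regularity of $a$ are exactly what these tools demand, and the constants obtained depend only on the fixed data and on $R$, completing the argument.
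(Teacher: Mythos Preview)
The paper does not give its own proof of this lemma: it is quoted directly from \cite{elschner2007optimal} and used as a black box. So there is no in-paper argument to compare your outline against.

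That said, your sketch follows the natural route one would take to prove such a statement from scratch: reduce to homogeneous Dirichlet data via a trace extension, invoke the Agmon--Douglis--Nirenberg $L^p$-estimate for the principal part (using $a\in C^1(\overline\Omega)$ and $\partial\Omega\in C^2$), and then argue uniformity over $\chi\in K_R$ because the lower-order coefficient enters the constants only through its $L^\infty$-bound. One step is phrased a bit loosely: writing that ``the weak maximum principle absorbs the $L^p$-term'' is not quite how the argument goes. The ADN estimate gives $\|\tilde u\|_{H^2_p}\le c(\|\tilde f\|_{L^p}+\|\tilde u\|_{L^p})$, and removing the $\|\tilde u\|_{L^p}$-term \emph{uniformly in $\chi$} requires either a compactness-plus-uniqueness contradiction (extract a limit $\chi$, use that $\chi_0+\chi\ge0$ forces the limit homogeneous solution to vanish), or a direct a priori $L^p$-bound on $\tilde u$ in terms of $\tilde f$ obtained by testing against $|\tilde u|^{p-2}\tilde u$ and exploiting the sign of the zeroth-order term. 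Either works here, but the maximum principle alone does not literally absorb the term; it supplies the uniqueness input to one of these mechanisms. Your treatment of the $B_\chi$-isomorphism and its uniform inverse bound is accurate, and your remark that the $p\ne2$ case is the delicate point (handled via Sneiberg stability or the $L^p$-theory of \cite{elschner2007optimal}) is exactly right.
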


In view of Lemma \ref{pro:elliptic}, if we set $X=L^p(\Omega)$,  $Y=H^1_p(\Omega)$,  and $F:   D(F)\to Y$ by
\beq\label{FD}
F(\chi):=u(\chi) \quad \forall \chi\in D(F):=K_R,
\eeq
 then the inverse radiativity problem is equivalent to the operator equation \eqref{eq:opeq}.   The associated Tikhonov regularization method       reads as follows:
\begin{equation}\label{Tik:pde}
\left\{
\begin{aligned}
&\textrm{Minimize} \quad \|u(\chi)-u^\delta\|_{ H^1_p(\Omega)}^p+\kappa\|\chi\|^p_{\mathring{H}^1_p(\Omega)},\\
& \textrm{subject to} \quad    \chi \in K_R.
\end{aligned}
\right .
\end{equation}
 We underline that  \eqref{Tik:pde} is  oversmoothing since in general the true solution  $\chi^\dag$ does not admit  the regularity property in  $\mathring{H}^1_p(\Omega)$, i.e., $\|\chi\|_{\mathring{H}^1_p(\Omega)}=\infty $.  Let us demonstrate that  Theorem \ref{main:sectorial} applies to  the oversmoothing Tikhonov problem
\eqref{Tik:pde}. To this end, we verify that all assumptions of Theorem \ref{main:sectorial} are satisfied. First of all, the existence of an invertible sectorial operator with efficient domain identical to  $\mathring{H}^1_p(\Omega)$ is guaranteed by the following lemma:

\begin{Lemma}[see{\cite{ChenYousept20}}]\label{ex:PL}
	Let   $A_p: D(A_p)\subset L^p(\Omega)\to L^p(\Omega)$ be given by
			$A_p u:=-\Delta u $  and $D(A):=\{ u\in H_p^2(\Omega)\mid \gamma u=0\}$. Then $A_p: D(A_p)\subset L^p(\Omega)\to L^p(\Omega)$ is a 0-sectorial operator such that its fractional power spaces $D(A_{p}^\theta)$ can be characterized as follows:
			\beq\label{space:Ap}
			D(A^\theta_{p})=\begin{cases} H^{2\theta}_p(\Omega)\, &0\leq\theta<\f{1}{2p},\\
				\{H^{2\theta}_p(\Omega)\mid \gamma u=0\}\, &  1\geq \theta>\f{1}{2p}\,\text{and}\,\theta\neq  \f{p+1}{2p}.
			\end{cases}
			\eeq
\end{Lemma}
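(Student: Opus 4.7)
\textbf{Proof plan for Lemma \ref{ex:PL}.}
My plan divides into two stages: first, establishing the $0$-sectorial property of $A_p$ on $L^p(\Omega)$, and second, identifying the fractional-power spaces $D(A_p^\theta)$ via complex interpolation.

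For the sectorial property, I would start from the classical $L^p$ elliptic regularity of Agmon--Douglis--Nirenberg type, which, thanks to $\p\Om$ being of class $C^2$, yields for every $\varphi \in (0,\pi)$ the resolvent estimate
\[
\|u\|_{H^2_p(\Om)} + |\lambda|\,\|u\|_{L^p(\Om)} \;\leq\; C_\varphi\,\|(\lambda\,\textup{id} + A_p)\,u\|_{L^p(\Om)} \quad \forall\, u\in D(A_p),\ \lambda\in \C\setminus S_\varphi,
\]
exploiting the formal symmetry and positivity of $-\Delta$. Since $\lambda\,\textup{id}+A_p$ is Fredholm of index zero on $D(A_p)\subset L^p(\Om)$, this estimate forces surjectivity, and $\sigma(A_p)\subset [\lambda_1,\infty)$, where $\lambda_1>0$ is the first Dirichlet eigenvalue of $-\Delta$ on $\Om$. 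In particular $A_p$ is invertible, and because $\textup{rg}(A_p) = L^p(\Om)$, its range is dense. The three items of Definition \ref{def:sec} are thereby verified for every $\omega\in (0,\pi)$.

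For the characterization of $D(A_p^\theta)$ I would invoke the deep but classical fact that the Dirichlet Laplacian on the UMD space $L^p(\Om)$, $1<p<\infty$, admits a bounded $H^\infty$-functional calculus (via Calder\'on--Zygmund theory together with Gaussian heat-kernel bounds on smooth domains), and in particular possesses bounded imaginary powers. By the Dore--Venni theorem this gives the identification
\[
D(A_p^\theta) \;=\; [L^p(\Om),\,D(A_p)]_\theta \quad \forall\, \theta\in(0,1)
\]
with norm equivalence. The task is then reduced to the classical interpolation identity of Grisvard--Seeley
\[
[L^p(\Om),\,\{v\in H^2_p(\Om)\mid \gamma v = 0\}]_\theta \;=\; \begin{cases} H^{2\theta}_p(\Om), & 0\leq 2\theta < 1/p, \\ \{v\in H^{2\theta}_p(\Om)\mid \gamma v = 0\}, & 2\theta > 1/p,\ 2\theta\neq 1+1/p, \end{cases}
\]
which is proved through the $L^p$ trace theorem together with the existence of a bounded right inverse to $\gamma$. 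The threshold $2\theta = 1/p$ is excluded because the trace operator ceases to be defined on $H^{2\theta}_p(\Om)$, while the exceptional value $2\theta=1+1/p$ (that is, $\theta=(p+1)/(2p)$) is precisely where the trace picture for the complex interpolation space degenerates.

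The main obstacle I anticipate is the bounded $H^\infty$-calculus: it is the price to pay for replacing the usual real interpolation (which would only give Besov spaces) by the sharper complex interpolation identification appearing in \eqref{space:Ap}. A shortcut would be to appeal directly to Seeley's theorem on complex powers of elliptic boundary value problems, which bypasses the $H^\infty$-machinery at the cost of invoking an equally deep result. The remaining step, the Grisvard--Seeley interpolation formula, is standard but requires careful bookkeeping near the excluded critical exponents.
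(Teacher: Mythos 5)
The paper does not prove Lemma~\ref{ex:PL} itself but cites it from \cite{ChenYousept20}, so there is no in-text argument to measure your proposal against. Your plan is the standard and almost certainly the intended route: $0$-sectoriality from $L^p$ elliptic a priori estimates of Agmon--Douglis--Nirenberg type combined with a Fredholm-index-zero argument (with the $C^2$ boundary making the full aperture range $\varphi\in(0,\pi)$ available), followed by identification of $D(A_p^\theta)$ with the complex interpolation space $[L^p(\Omega),D(A_p)]_\theta$ via bounded imaginary powers (or, more directly, via Seeley's theorem on complex powers of elliptic boundary value problems, which you rightly flag as the cleaner shortcut), and then the Grisvard--Seeley interpolation formula. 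Your reading of the two excluded exponents is also correct: $2\theta=1/p$ is the threshold below which the Dirichlet trace is not defined, and $2\theta=1+1/p$ is a Lions--Magenes-type degeneration where the interpolation space is strictly smaller than $\{u\in H^{1+1/p}_p(\Omega)\mid\gamma u=0\}$.

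One sign to fix. In your displayed resolvent estimate you wrote $(\lambda\,\textup{id}+A_p)$ for $\lambda\in\C\setminus S_\varphi$. Since $A_p=-\Delta$ has spectrum in $[\lambda_1,\infty)$, the resolvent relevant to Definition~\ref{def:sec} is $(\lambda\,\textup{id}-A_p)^{-1}$ for $\lambda\notin\overline{S_\varphi}$. As written, $\lambda\,\textup{id}+A_p=\lambda-\Delta$ fails to be invertible at $\lambda=-\lambda_1$, which lies in $\C\setminus\overline{S_\varphi}$, so the inequality cannot hold in the stated generality; the estimate should read $\|u\|_{H^2_p(\Omega)}+|\lambda|\|u\|_{L^p(\Omega)}\le C_\varphi\|(\lambda\,\textup{id}-A_p)u\|_{L^p(\Omega)}$. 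This is a bookkeeping slip and does not affect the substance of your argument.
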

Setting  $A:=A_p^{\f{1}{2}}$,    Lemma \ref{ex:PL} and  Lemma \ref{lemma:interp} (iii) imply that  $A:D(A)\subset  L^p(\Omega)\to L^p(\Omega)$ is an invertible $0$-sectorial operator  with		$
		D(A)=\mathring{H}^1_p(\Omega)
		$
		with norm equivalence  $\|\cdot\|_{\mathring{H}^1_p(\Omega)} \sim \|A \cdot\|_{L^p(\Omega)}$. In particular, \eqref{eq:V} is satisfied with $V=\mathring{H}^1_p(\Omega)$.  Moreover,  the adjoint operator of it is exactly $A_{q}^{1/2}:D(A_q^{1/2})\subset L^q(\Omega)\to L^q(\Omega)$ with $q=\f{p}{p-1}$. Then, it follows from \eqref{frac:dual} and \eqref{space:Ap} that
		\beq\label{A-1}
		\|A^{-1} x\|_{L^p(\Omega)}\sim \|x\|_{H^{-1}_p(\Omega)}\quad \forall x\in L^p(\Omega).
		\eeq
 In view of \eqref{A-1},  the upcoming  lemma shows that Assumption 1 holds with  the forward operator $F$ given by \eqref{FD}, $a=1$, and $U=H_p^{-1}(\Omega)$. 		

		\begin{Lemma}\label{the:stab}
	If    $|u(\chi^\dag)|>c_0$ holds in $\overline{\Omega}$ for some positive constant $c_0>0$, then there exists a constant $C>0$ such that
	$$
	\f 1 C \|\chi-\chi^\dag\|_{H^{-1}_p(\Omega)} \leq \|u(\chi)-u(\chi^\dag)\|_{ {H}_p^1(\Omega)}\leq C\|\chi-\chi^\dag\|_{H^{-1}_p(\Omega)}\quad \forall\,\chi\in K_R.
	$$
	
\end{Lemma}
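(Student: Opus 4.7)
\medskip

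\noindent\textbf{Proof plan.} The plan is to exploit the \emph{difference equation}
between $u(\chi)$ and $u(\chi^\dag)$ and then transfer everything through the isomorphism $B_\chi$ from Lemma~\ref{pro:elliptic} together with the multiplicative estimate in Lemma~\ref{lemma:mul}. Writing $u:=u(\chi)$ and $u^\dag:=u(\chi^\dag)$, both solve \eqref{q1} with the same boundary data $g$, so $u-u^\dag\in\mathring H^1_p(\Omega)$, and subtracting the two equations yields
\begin{equation*}
B_\chi(u-u^\dag)\;=\;-(\chi-\chi^\dag)\,u^\dag\qquad\text{in }H^{-1}_p(\Omega).
\end{equation*}
This algebraic identity is the backbone of both inequalities.

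\smallskip
\noindent\emph{Upper bound.} I would first observe that because $d<p<\infty$, the Sobolev embedding $H^2_p(\Omega)\hookrightarrow W^{1,\infty}(\Omega)$ combined with the uniform bound \eqref{ukb} gives $\|u^\dag\|_{W^{1,\infty}(\Omega)}\le C$. Since $u-u^\dag\in \mathring H^1_p(\Omega)\hookrightarrow H^1_p(\Omega)$, Lemma~\ref{pro:elliptic} yields
\begin{equation*}
\|u-u^\dag\|_{H^1_p(\Omega)}\;\le\;\|u-u^\dag\|_{\mathring H^1_p(\Omega)}\;\le\;C_B\,\|(\chi-\chi^\dag)u^\dag\|_{H^{-1}_p(\Omega)},
\end{equation*}
and Lemma~\ref{lemma:mul} bounds the right-hand side by $C_B\|u^\dag\|_{W^{1,\infty}(\Omega)}\|\chi-\chi^\dag\|_{H^{-1}_p(\Omega)}$.

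\smallskip
\noindent\emph{Lower bound.} Here I would use the positivity assumption $|u^\dag|>c_0$ on $\overline\Omega$ to invert pointwise: since $u^\dag\in W^{1,\infty}(\Omega)$ is bounded away from zero, $1/u^\dag\in W^{1,\infty}(\Omega)$ with $\|1/u^\dag\|_{W^{1,\infty}(\Omega)}\le C(c_0,C_E)$ (the derivative being $-\nabla u^\dag/(u^\dag)^2$). Multiplying the difference equation by $1/u^\dag$ and applying Lemma~\ref{lemma:mul} once more gives
\begin{equation*}
\|\chi-\chi^\dag\|_{H^{-1}_p(\Omega)}
\;\le\;\bigl\|1/u^\dag\bigr\|_{W^{1,\infty}(\Omega)}\,\|B_\chi(u-u^\dag)\|_{H^{-1}_p(\Omega)}
\;\le\;C\,\|u-u^\dag\|_{\mathring H^1_p(\Omega)},
\end{equation*}
the last step invoking the uniform bound on $\|B_\chi\|$ in Lemma~\ref{pro:elliptic}. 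Since $u-u^\dag$ has zero trace, the $\mathring H^1_p$ and $H^1_p$ norms coincide, completing the lower bound.

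\smallskip
\noindent\emph{Main obstacle.} The routine computations are the two multiplications via Lemma~\ref{lemma:mul}; the one genuinely delicate point is the justification that $1/u^\dag$ is an admissible multiplier in $W^{1,\infty}(\Omega)$. This requires combining the non-vanishing hypothesis $|u^\dag|>c_0$ with the Sobolev embedding $H^2_p(\Omega)\hookrightarrow W^{1,\infty}(\Omega)$, which is exactly why the assumption $d<p<\infty$ on the integrability exponent is needed. Once this is secured, everything else is bookkeeping on the constants, all of which are independent of $\chi\in K_R$ thanks to the uniform bounds $C_E$ and $C_B$ from Lemma~\ref{pro:elliptic}.
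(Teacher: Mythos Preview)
Your proof is correct and follows essentially the same route as the paper: derive the difference equation $B_\chi(u-u^\dag)=-(\chi-\chi^\dag)u^\dag$, use the uniform isomorphism bounds on $B_\chi$ from Lemma~\ref{pro:elliptic}, and handle the multiplication by $u^\dag$ (respectively $1/u^\dag$) via Lemma~\ref{lemma:mul} together with the embedding $H^2_p(\Omega)\hookrightarrow W^{1,\infty}(\Omega)$ for $p>d$. The only cosmetic difference is that the paper phrases the lower bound by asserting $u(\chi^\dag)^{-1}\in H^2_p(\Omega)$ before passing to $W^{1,\infty}$, whereas you go directly to $1/u^\dag\in W^{1,\infty}(\Omega)$, which is all Lemma~\ref{lemma:mul} actually requires.
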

\begin{proof}
 Let $q=\f{p}{p-1}$.
	By the definition \eqref{q1}, for each $\chi\in K_R$,  the function $w:=u(\chi)-u(\chi^\dag)$ satisfies
	$$
	\int_\Omega a \nabla w\cdot \nabla v+(\chi_0+\chi) w v dx=\int_\Omega
	(\chi^\dag-\chi)u(\chi^\dag) v dx\quad \forall\, v\in \mathring{H}_q^1(\Omega),
	$$
	which is equivalent to
	$B_\chi w=(\chi^\dag-\chi)u(\chi^\dag)$ where $B_\chi:\mathring{H}^1_p(\Omega)\to H^{-1}_p(\Omega)$ is defined as in  \eqref{eq:Bp}. Then, Lemma \ref{pro:elliptic} ensures that
	\beq\label{iso1}
	\f{1}{C_B}\| u(\chi^\dag)(\chi-\chi^\dag)\|_{H_p^{-1}(\Omega)}
	\leq \|u(\chi)-u(\chi^\dag)\|_{ {H}_p^1(\Omega)} \leq C_B\| u(\chi^\dag)(\chi-\chi^\dag)\|_{H_p^{-1}(\Omega)}.
	\eeq
	On the other hand, 	Lemmas \ref{lemma:mul} and \ref{pro:elliptic}  ensure that
	\begin{align}\label{iso2}
	\| u(\chi^\dag)(\chi-\chi^\dag)\|_{H_p^{-1}(\Omega)}\leq& \|u(\chi^\dag)\|_{W^{1,\infty}(\Omega)}\|\chi-\chi^\dag\|_{H_p^{-1}(\Omega)}\notag\\
	\leq& C_m\|u(\chi^\dag)\|_{H^2_p(\Omega)} \|\chi-\chi^\dag\|_{H_p^{-1}(\Omega)},
	\end{align}
	where $C_m>0$ denotes the embedding constant associated with  $H^{2}_p(\Omega) \embed W^{1,\infty}(\Omega)$ as $p > d$. On the other hand, using the Leibniz rule and the condition $|u(\chi^\dag)|>c_0$, we obtain that $u(\chi^\dag)^{-1}\in  H^{2}_p(\Omega)$. Then, again by Lemma \ref{lemma:mul},
		\beq\label{iso3}
	\|\chi-\chi^\dag\|_{H_p^{-1}(\Omega)}
	\leq C_m\|u(\chi^\dag)^{-1}\|_{H_p^2(\Omega)}\|u(\chi^\dag)(\chi-\chi^\dag)\|_{H_p^{-1}(\Omega)}.
	\eeq
	The combination of \eqref{iso1}-\eqref{iso3} yields the desired result.
	
\end{proof}

Now, it remains to verify     the last assumption of   Theorem \ref{main:sectorial} concerning the existence of the holomorphic function $f$ with properties as in Theorem \ref{main:sectorial}.  Let us set $f(z):=e^{-z^2}$.  This function obviously belongs to $\E(S_\varphi)$ for any  $0<\varphi<\f{\pi}{4}$.  In the following,  let $\varphi\in (0,\pi/4)$ be arbitrarily  fixed. Since $e^{-z^2}=1-z^2+o(z^4)$ as $z\to 0$, and  the mapping $z\mapsto ze^{-z^2}$ belongs to $\E(S_\varphi)$,  it follows by Lemma \ref{Lemma:es} that for any $s\in (0,1)$,  the mappings $z\mapsto z^{-(1+s)} (f(z)-1)$ and $z\mapsto z^s f(z)$ are of class $\E(S_\varphi)$.  Let us now verify the final condition \eqref{sect:invariance} in    Theorem \ref{main:sectorial}. First, as $A:=A_p^{\f{1}{2}}$,  we have that
		\beq\label{Pl}
		f(tA) =e^{-(tA)^2}=e^{-t^2A_p}\quad t>0.
		\eeq
	Furthermore,  according to \cite[Corollary 4.3 and Theorem 4.9]{{ouhabaz2009analysis}}, it holds that
		 $$
		 \|e^{-A_p t} x\|_{L^\infty(\Omega)}\leq
		\|x\|_{L^\infty(\Omega)}\quad \forall\,(x,t)\in L^\infty(\Omega)\times (0,\infty)
		 $$
		 and
		 $$
		 e^{-t A_p} x\geq 0\,\, \textup{a.e. in}\,\,\Omega \quad \text{for all non-negative} \, x\in L^p(\Omega) \,\,\text{and all}\,\,t\geq 0.
		 $$
As a consequence,
		$$
		f(tA) K_R\subset K_R\quad \forall t\geq 0,
		$$
		which yields   the desired condition \eqref{sect:invariance} for    Theorem \ref{main:sectorial}.  Altogether,  we have verified all requirements of  Theorem \ref{main:sectorial} for the oversmoothing Tikhonov problem
\eqref{Tik:pde}, leading to the following result:

\begin{Corollary}

	Suppose  that $\chi^\dag\in K_R$  and  $|u(\chi^\dag)|>c_0$ for some positive constant $c_0$. If $\chi^\dag\in  {H}_p^\theta(\Omega)$ with $\theta\in (0,\f{1}{p})$ or
	$\chi^\dag\in \mathring{{H}}_p^\theta(\Omega)$ for some $\theta\in (\f{1}{p},1)$, then every minimizer $\chi^\delta_\kappa$ of  \eqref{Tik:pde} satisfies
	$$
	\|\chi^\delta_\kappa-\chi^\dag\|_{L^p(\Omega)}=O(\delta^{\f{\theta}{1+\theta}})\quad \textup{as}\,\,\delta \to 0.
	$$
\end{Corollary}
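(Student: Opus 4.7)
The plan is to apply Theorem \ref{main:sectorial} directly. The discussion immediately preceding the corollary has already verified most of its hypotheses in the setting $X = L^p(\Omega)$, $Y = H^1_p(\Omega)$, $V = \mathring{H}^1_p(\Omega)$, $F(\chi) := u(\chi)$, $D(F) := K_R$. With $A := A_p^{1/2}$, Lemma \ref{ex:PL} and Lemma \ref{lemma:interp}(iii) show that $A$ is invertible $0$-sectorial with $D(A) = \mathring{H}^1_p(\Omega)$, so \eqref{eq:V} holds. Lemma \ref{the:stab} combined with \eqref{A-1} yields Assumption \ref{ass1} with $U = H^{-1}_p(\Omega)$ and $a = 1$. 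The function $f(z) := e^{-z^2}$ lies in $\E(S_\varphi)$ for $\varphi \in (0, \pi/4)$, and the maps $z \mapsto z^{-(1+s)}(f(z) - 1)$ and $z \mapsto z^s f(z)$ belong to $\E(S_\varphi)$ for every $s \in (0, 1)$. Moreover, the invariance condition $f(tA)\chi^\dag \in K_R$ for all $t > 0$ follows from \eqref{Pl} and the $L^\infty$-contractivity together with the positivity-preserving property of the Dirichlet heat semigroup $e^{-tA_p}$.

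What remains is to verify the solution-smoothness condition $\chi^\dag \in M_{\theta, E}$ in Theorem \ref{main:sectorial}. Since $A = A_p^{1/2}$, Lemma \ref{lemma:interp}(iii) gives $A^\theta = A_p^{\theta/2}$, so $D(A^\theta) = D(A_p^{\theta/2})$ with norm $\|A^\theta \cdot\|_{L^p(\Omega)}$. Applying Lemma \ref{ex:PL} with exponent $\theta/2$, I identify $D(A^\theta) = H^\theta_p(\Omega)$ when $\theta/2 < 1/(2p)$, i.e.\ $\theta \in (0, 1/p)$, and $D(A^\theta) = \{u \in H^\theta_p(\Omega) : \gamma u = 0\} = \mathring{H}^\theta_p(\Omega)$ when $1/(2p) < \theta/2 \leq 1/2$ and $\theta/2 \neq (p+1)/(2p)$; both of these latter constraints are automatic for $\theta \in (1/p, 1)$ since $(p+1)/p > 1$. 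Hence, in either regularity case assumed on $\chi^\dag$, there exists $E > 0$ with $\|A^\theta \chi^\dag\|_{L^p(\Omega)} \leq E$, so that $\chi^\dag \in M_{\theta, E}$.

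With all the hypotheses of Theorem \ref{main:sectorial} in force and $a = 1$, the error bound \eqref{conv:final} directly yields $\|\chi^\delta_\kappa - \chi^\dag\|_{L^p(\Omega)} \leq c\, \delta^{\theta/(1+\theta)}$ for all sufficiently small $\delta > 0$, which is the claimed convergence rate. The only delicate point I anticipate is the clean identification $D(A_p^{\theta/2}) = \mathring{H}^\theta_p(\Omega)$ in the range $\theta \in (1/p, 1)$, which relies on the standard interpolation fact that the zero-trace subspace of $H^\theta_p(\Omega)$ coincides with $\mathring{H}^\theta_p(\Omega)$ for such $\theta$; this is implicit in Lemma \ref{ex:PL} and requires no further argument here.
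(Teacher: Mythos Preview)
Your proposal is correct and follows the same approach as the paper: the corollary is obtained by a direct application of Theorem \ref{main:sectorial}, and all of its hypotheses have been checked in the preceding discussion exactly as you summarize. The paper itself does not give a separate proof of the corollary---it simply states the result after declaring that ``all requirements of Theorem \ref{main:sectorial}'' have been verified---so your explicit identification $D(A^\theta)=D(A_p^{\theta/2})$ via Lemma \ref{lemma:interp}(iii) and Lemma \ref{ex:PL}, together with the observation that $(p+1)/p>1$ rules out the exceptional index for $\theta\in(1/p,1)$, actually makes the smoothness condition $\chi^\dag\in M_{\theta,E}$ more transparent than in the paper. For the inclusion $\mathring{H}^\theta_p(\Omega)\subset\{u\in H^\theta_p(\Omega):\gamma u=0\}$ you only need continuity of the trace on $H^\theta_p(\Omega)$ for $\theta>1/p$, which is standard, so no gap remains.
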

\begin{Remark}
The application of the developed theory (Theorems \ref{main:sectorial} and \ref{main}) can be extended to more complicated nonlinear PDEs with low regularity arising in particular from applications in nonlinear electromagnetic inverse and optimal design problems \cite{chenyousept2018,LamYousept2020,Yousept2012,Yousept2013,Yousept2017,Yousept2020a,Yousept2020b}.
\end{Remark}

\section{Appendix: real and complex interpolation} \label{sec:appen}

We follow \cite[Appendix B]{Hasse} for the definition of the real and complex interpolation spaces. Let $(X,Y)$ be an interpolation couple. For given $x\in X+Y$ and $t>0$,  we introduce
$$
K(t,x):=K(t,x,X,Y):=\inf\{\|a\|_X+t\|b\|_Y\mid x=a+b,a\in X,b\in Y\}.
$$
Moreover, for given $p\in [1,\infty]$,  let $L_\star^p(0,\infty)$ denote the space
of $p$-integrable functions on $(0,+\infty)$ with respect to the measure $\frac{1}{t}dt$.
 Now, we introduce for $\theta\in (0,1)$ and $p\in [1,\infty]$, the real interpolation space  as follows:
$$
(X,Y)_{\theta,p}:=\{x\in X+Y\mid  \text{the function} \, (0,\infty)\ni t\mapsto t^{-\theta}K(t,x)\in \R \,\,\textup{belongs to}\,\, L_\star^p(0,\infty) \},
$$
endowed with the norm
$$
\|x\|_{(X,Y)_{\theta,p}}:=\begin{cases}
\displaystyle \left(\int_0^\infty t^{-p\theta-1}K(t,x)^p dt\right)^{1/p}\quad &p\in[1,\infty),\\
\sup_{t\in (0,\infty)}\{t^{-\theta}K(t,x)\}\quad &p=\infty.
\end{cases}
$$
In the following, let  $S$  denote the vertical strip, i.e.,
$$
S:=\{z\in \mathbb C\mid 0<\Re z<1\},
$$
where $\Re z$ denotes the real part of $z\in \mathbb C$.
\begin{Definition}
Let $(X,Y)$ be an interpolation couple. The space $\mathcal{F}(X,Y)$ consists of all functions $f:S\to X+Y$ satisfying the following properties:
\begin{enumerate}

\item  $f$ is holomorphic in the interior of the strip,   continuous and bounded up to its boundary
with values in $X+Y$.

\item The function
$$
f_0(t):=f(it)\quad \quad \forall\, t\in\R
$$  is bounded and continuous with values in $X$.

\item The function
$$
f_1(t):=f(1+it)\quad \quad \forall\, t\in\R
$$
is bounded and continuous with values in $Y$.

\end{enumerate}
Moreover, the space  $\mathcal{F}(X,Y)$  is endowed
with the   norm
$$
\|f\|_{\mathcal{F}(X,Y)}=\max\{\sup_{t\in \R}\|f_0(t)\|,\sup_{t\in \R}\|f_1(t)\| \}.
$$
\end{Definition}
For every $\theta\in [0,1]$, we define the complex interpolation  space by
$$
[X,Y]_\theta:=\{f(\theta) \in X + Y \mid f\in \mathcal{F}(X,Y)\},
$$
equipped with the norm
$$
\|x\|_{[X,Y]_\theta}=\inf_{f\in  \mathcal{F}(X,Y) \& f(\theta)=x }\|f\|_{\mathcal{F}(X,Y)}.
$$

 \end{document}